\theoremstyle{plain}
\newtheorem{theorem}{Theorem}
\newtheorem*{theorem*}{Theorem}
\newtheorem{lemma}{Lemma}[section]
\newtheorem{proposition}[lemma]{Proposition}
\theoremstyle{definition}
\newtheorem{definition}[lemma]{Definition}
\newtheorem{algorithm}[lemma]{Algorithm}
\theoremstyle{remark}
\newtheorem{remark}[lemma]{Remark}
\newtheorem*{remark*}{Remark}
\newcommand{\kw}[1]{\textbf{#1}}
\newcommand{\bR}{{\mathbb R}}
\newcommand{\bN}{{\mathbb N}}
\newcommand{\bZ}{{\mathbb Z}}
\newcommand{\cA}{{\mathcal A}}
\newcommand{\cB}{{\mathcal B}}
\newcommand{\cE}{{\mathcal E}}
\newcommand{\cF}{{\mathcal F}}
\newcommand{\cG}{{\mathcal G}}
\newcommand{\cP}{{\mathcal P}}
\newcommand{\cR}{{\mathcal R}}
\newcommand{\cT}{{\mathcal T}}
\newcommand{\cU}{{\mathcal U}}
\newcommand{\cV}{{\mathcal V}}
\newcommand{\cW}{{\mathcal W}}
\DeclareMathOperator{\interior}{int}
\DeclareMathOperator{\card}{card}
\DeclareMathOperator{\cl}{cl}
\DeclareMathOperator{\diam}{diam}
\DeclareMathOperator{\GCD}{GCD}
\begin{document}

\title[Finite resolution dynamics]{Finite resolution dynamics}

\author[S. Luzzatto]{Stefano Luzzatto}
\address{Stefano Luzzatto \\
Mathematics Department, Imperial College \\
180 Queen's Gate \\ London SW7 2AZ, United Kingdom}

\curraddr{Abdus Salam, International Centre
for Theoretical Physics, Strada Costiera 11, 34151 Trieste, Italy}
\email{luzzatto [at] ictp.it}
\urladdr{http://www.ictp.it/\symbol{126}luzzatto/}

\author[P. Pilarczyk]{Pawe\l{} Pilarczyk}
\address{Pawe\l{} Pilarczyk \\
Centro de Matem\'atica, Universidade do Minho \\
Campus de Gualtar \\ 4710-057 Braga, Portugal}
\email{pawel.pilarczyk [at] math.uminho.pt}
\urladdr{http://www.pawelpilarczyk.com/}
\thanks{\emph{Acknowledgements:}
We express our gratitude to several colleagues
who provided us with constructive feedback on our work.
We also thank the anonymous referees for their very careful reading
of the paper and for several suggestions which allowed us
to significantly improve the presentation
and even the overall point of view of the results.
This research was partly funded by the Royal Society (UK).
All the computations for this paper were conducted on a computer
funded by the Japan Society for the Promotion of Science (JSPS),
Grant-in-Aid for Scientific Research (No.\ 1806039),
Ministry of Education, Science, Technology, Culture and Sports, Japan.
\\
Communicated by Peter Kloeden}
\keywords{dynamical system, finite resolution, open cover,
combinatorial dynamics, rigorous numerics, directed graph,
transitivity, mixing, algorithm}

\subjclass[2000]{Primary: 37M99, 65P20 Secondary: 65G20}

\begin{abstract}
We develop a new mathematical model for describing a dynamical system
at limited resolution (or finite scale),
and we give precise meaning to the notion of a dynamical system
having some property at all resolutions coarser than a given number.
Open covers are used to approximate the topology
of the phase space in a finite way,
and the dynamical system is represented
by means of a combinatorial multivalued map.
We formulate notions of transitivity and mixing
in the finite resolution setting
in a computable and consistent way.
Moreover, we formulate equivalent conditions
for these properties in terms of graphs,
and provide effective algorithms
for their verification. As an application we show
that the H\'enon attractor is mixing
at all resolutions coarser than~$10^{-5}$.
\end{abstract}

\maketitle


\section{Introduction and statement of results}
\label{sec:intro}

The theory of dynamical systems has experienced tremendous growth and
development throughout the recent decades.
Nevertheless, in spite of the undeniable importance of the results
describing complicated dynamical properties
many of these
results are abstract existence or genericity results.
This can be problematic for example in applications,
where generally more concrete and
quantitative results are desirable,
especially if one is studying the properties
of some very specific objects for which analytic methods fail.
It is therefore natural to try to use
numerical and computational techniques, and indeed there is
an enormous amount of literature in this direction.
Unfortunately, this approach is limited to
\emph{finite resolution} and to \emph{finite time} and is therefore
unsuitable for direct rigorous verification of infinite time or
infinitesimal scale properties without additional theoretical
interpretation of the results of computations.

This combination of numerical methods and theoretical interpretation
can indeed be remarkably successful in certain situations.
In many cases non-trivial mathematics can reduce the problem
to the verification of
certain ``inclusion conditions'' or the existence
of certain geometric structures
which can be rigorously verified numerically;
then this information can be used to develop symbolic dynamics
or prove existence of certain trajectories,
e.g., heteroclinic connections.
This approach includes the application of some topological methods,
like the fixed point index or the Conley index.
We mention for example
\cite{Ara07, AraKalKokMisOkaPil09, EckKocWit84, Gal02, Hru06,
KapZgl03, KapSim07, Lan82, MisMro95, Mro99, Pil99, Pil03,
PilSto08, Szy97, Tuc99, Zgl04} and refer the reader
to references therein for further information.
However, the bottom line is that by the intrinsic limitations
of numerical computation, only ``robust'' phenomena,
i.e., phenomena which persist for small perturbations of the system,
can be proved in this way. Indeed, the requirement that all estimates
be rigorously bounded, necessarily implies that there is always
a little margin of perturbation in which they continue to hold true;
see \cite{Mro96} for an interesting discussion of the properties
which can be rigorously proved numerically,
referred to as ``inheritable properties''.
This is of course in many ways also one of the strengths
of the methods rather than just a limitation,
but it does mean that ``unstable'' phenomena,
i.e., dynamical features which exist for one system
but perhaps not for nearby systems, are essentially \emph{undecidable},
both in a theoretical as well as in a practical sense: see \cite{ArbMat04}
for a formal discussion of this problem. The best we can hope for in
this situation is to be able to prove that such phenomena occur
for ``some'', or perhaps even ``many'',
systems close to the one of interest,
but even this strategy is sometimes extremely hard to implement.

One specific example of this situation is the
famous and very well studied H\'enon family
of two-dimensional diffeomorphisms given by
\[
H_{a,b} (x,y) = (1 + y - a x^{2}, b x).
\]
This family was introduced by H\'enon in \cite{Hen76}
where he focused particularly on the parameter values
$a = 1.4$, $b = 0.3$, now commonly referred to as the
``classical'' parameter values.
\begin{figure}[htbp]
\includegraphics[height=3.5cm]{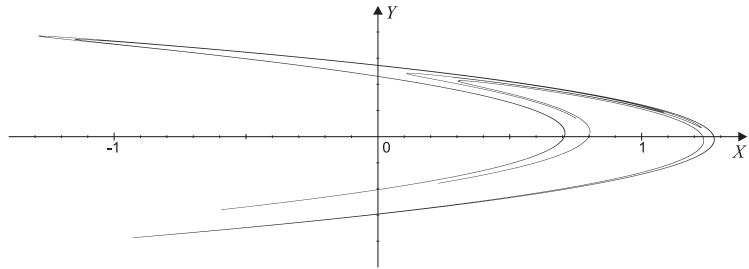}
\label{fig:henon}
\caption{A numerical approximation
of the attractor of the H\'enon map for the classical parameter values.}
\end{figure}
For these parameter values, non-rigorous
computer simulations suggest that most orbits converge
to a transitive attractor with a complicated fractal
geometric structure (see Figure~\ref{fig:henon}),
and exhibit chaotic dynamics.
While the existence of an attractor follows
by relatively elementary arguments,
it turns out that it is extremely difficult,
and indeed arguably \emph{impossible}, to make any rigorous
assertions about the infinitesimal structure
of the attractor.
Indeed, the geometry of the H\'enon map suggests the occurrence
of tangencies between stable and unstable manifolds, and in fact
in \cite{AraMis06} it is proved that
for $b$ sufficiently close to \( 0.3 \) there exists
some parameter \( a\in [1.392419807915, 1.392419807931] \)
for which a homoclinic tangency occurs (this is proved using
some clever application of Conley index theory combined
with some numerical ``inclusion'' estimates as discussed above).
Such tangencies are
a well known source of instability and bifurcations
and associated to a variety of dynamical phenomena
such as infinitely many sinks \cite{New74, PalTak93},
``small'' stochastic attractors \cite{MorVia93, Col98}
and others \cite{GonTurShi03}.
In particular the dynamics is extremely unstable and
there seems to be no hope to establish the actual dynamics
of any single given parameter.
For small values of \( |b| \) it is nevertheless possible to
show that
there is
a \emph{positive Lebesgue measure} of parameters \( a \)
for which the dynamics is stochastic in a very well defined sense,
thus showing that stochastic dynamics has in some sense
\emph{positive probability}.
This was first proved for \( b=0 \) in \cite{Jak81},
with some generalization to other one-dimensional maps
in \cite{Tsu93, Thu99, LuzTuc99, LuzVia00, PacRovVia98},
and for \( b\neq 0 \) in \cite{BenCar91, BenYou93},
with some generalizations in \cite{MorVia93, WanYou01}.
A quantitative approach was developed
in \cite{LuzTak06} with some explicit measure bounds.
Unfortunately,
further extensions of these results to larger values of \( |b| \),
to include for example the classical parameter values,
seem for the moment still out of reach.
We also emphasize that, as mentioned above,
these results still only provide a probabilistic description
of the parameter space, and do not in general
yield information about any particular given parameter.

Our goal in this paper is to approach the problem
from a radically different point of view,
which provides a different kind of information
and is hopefully much more flexible and versatile,
and more widely applicable.
The underlying philosophy is that only finite precision
is usually of practical meaning for applications,
and all phenomena which take place below certain resolution
are of no real importance, either because of the limited accuracy
with which a mathematical model describes a physical system
of interest (due to noise or truncations, for example),
or because of the finite precision of measuring devices
which provide observable properties of the system.
Our goal is therefore to sketch the beginnings of a
theory of \emph{finite resolution dynamics}
and to give an example of a non-trivial implementation
of the concepts developed in such a theory.
The starting point is defined by two basic ``axioms''
or ``principles'' which finite resolution dynamics
should verify:
\begin{quote}
\begin{center}
\textbf{(A)\emph{ Computability}}
\quad and \quad \textbf{ (B) \emph{Consistency}}
\end{center}
\end{quote}
These principles can in practice be implemented in a variety
of ways, but they embody the fundamental ideas that:
(A) dynamical properties should be formulated
in such a way that they are rigorously verifiable
using computational methods, in particular
they should be related to finite time
and finite resolution properties of the system;
(B) computations at a finer scale
should yield more valuable results in the sense that
if a dynamical property holds at some finite scale
then it should also hold for any coarser representation
of the same dynamics.

As an example of the application of these ideas
we define a notion of \emph{combinatorial mixing} which,
in the framework of finite resolution dynamics,
is analogous to the standard notions of topological
or measure-theoretical mixing for topological
or measurable systems, respectively.
We shall prove the following

\begin{theorem}
\label{thm:henon}
The H\'enon attractor is mixing at all resolutions \(> 10^{-5} \).
\end{theorem}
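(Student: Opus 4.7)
The plan is to reduce the theorem to a finite combinatorial computation carried out with rigorous interval arithmetic, and then to invoke the consistency principle~(B) together with the graph-theoretic characterisation of mixing established earlier in the paper. First I would fix a trapping region $R \subset \bR^{2}$ for the H\'enon map $H_{a,b}$ at the classical parameters, using a standard quadrilateral or rectangle whose forward image under $H$ is compactly contained in $R$; this justifies restricting attention to the maximal invariant set inside $R$, which contains the attractor. All subsequent work takes place inside $R$.

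Next, I would build an open cover $\cU$ of (a neighbourhood of) the attractor by a finite collection of small rectangles at resolution just below $10^{-5}$, say of diameter $r < 10^{-5}$. Using outward-rounded interval arithmetic on the polynomial expression for $H_{a,b}$, I would compute, for each element $U \in \cU$, a rigorous enclosure of $H(U)$ and record the list of elements of $\cU$ that this enclosure meets. This yields a combinatorial multivalued map $\cF : \cU \rightrightarrows \cU$ which is guaranteed to contain the true dynamics in the sense required for the finite resolution framework; equivalently, I obtain a directed graph $G$ on the vertex set $\cU$ whose edges record the possible one-step transitions at resolution $r$.

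Having $G$, the main substantive step is to apply the algorithmic characterisation of combinatorial mixing developed earlier in the paper: mixing at resolution $r$ reduces to a checkable property of $G$ (in the spirit of the usual ``some power of the transition matrix is strictly positive on the recurrent component'' criterion, with the appropriate cleaning of transient and non-recurrent vertices so that the notion is consistent with coarsenings). I would run this graph algorithm on $G$ and record that it returns ``yes''. Once mixing at resolution $r < 10^{-5}$ is certified, the consistency axiom~(B), which is built into the definitions, upgrades this to mixing at every coarser resolution, and in particular at all resolutions~$> 10^{-5}$, giving the theorem.

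The hard part is neither conceptual nor analytic but computational. At resolution $10^{-5}$ the cover of the attractor will consist of a very large number of boxes, and the enclosure $H(U)$ must be computed with enough care that spurious edges in $G$ do not destroy the mixing property (overshooting in the interval arithmetic would create extra edges, not remove them, but can still spoil the refined recurrence structure used by the mixing test). The practical obstacles are therefore: choosing a trapping region tight enough that the recurrent part of $G$ accurately reflects the attractor; keeping interval enclosures sharp (for instance by subdividing each box before applying $H$); and managing the memory and running time of the graph algorithm on a vertex set of this size. Success of the theorem hinges on this computation terminating with a positive answer, which is what the authors report.
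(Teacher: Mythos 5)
Your proposal follows the same overall route as the paper (rigorous interval-arithmetic enclosure of the Hénon map, construction of a combinatorial multivalued map on an open cover, graph-theoretic test for combinatorial mixing via strong connectedness and aperiodicity), but there is a genuine gap in the final step where you pass from the certified computation to the statement ``at all resolutions $>10^{-5}$''. You write that the consistency axiom~(B) alone upgrades mixing at the computed scale to mixing at every coarser resolution. The paper explicitly warns against precisely this: consistency says $\cP(\cF_1)\Rightarrow\cP(\cF_2)$ when $\cF_1$ is \emph{finer than} $\cF_2$, but a cover with larger mesh need not be \emph{coarser} in the required sense, and there always exist representations at coarser scales that are not comparable to your computed $\cF_0$. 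Consistency by itself therefore says nothing about those.

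The missing idea is the distinction between \emph{outer} and \emph{inner} resolution and the resulting comparison lemma. Definition~\ref{def:allres} formulates ``$\cP$ at all resolutions $>\varepsilon$'' in terms of the inner resolution $\cR^-$ (the supremal $d$ such that every ball of radius $d$ fits inside some cover element), and the crucial Theorem~\ref{thm:allres} (via Lemmas~\ref{lem:fincov} and~\ref{lem:finmap}) shows that if $\cR^+(\cF_0)\le\varepsilon$ for a single computed representation, then $\cF_0$ is automatically finer than \emph{every} representation with $\cR^->\varepsilon$; this is where the non-trivial overlap of an open cover (as opposed to a partition, whose inner resolution is $0$) is essential, and it is what makes consistency actually deliver the quantifier ``for all coarser representations''. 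You also need to control $\cR^+(\cF_0)$ rather than merely the diameter of the cover elements: by Definition~\ref{def:outresF}, $\cR^+(\cF)$ includes $\diam|\cF(U)|$, so for an expanding map one must verify the image diameters too, not just that the boxes are smaller than $10^{-5}$. With these two points supplied, the remainder of your outline matches the paper's argument.

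A minor further difference: you suggest restricting the mixing test to the recurrent component after cleaning transient vertices. The paper instead grows the cover $\cU$ iteratively from a seed near the attractor until it becomes forward-invariant ($\cF(\cU)\subset\cU$), sets $X:=|\cU|$, and then tests strong connectedness and aperiodicity of the entire associated graph. Both are defensible design choices, but the latter is what the paper implements and what the stated Theorem~\ref{thm:henon2} refers to (an open $X$ with $H_{a,b}(X)\subset X$).
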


We emphasize that, once the appropriate definitions have been put
in place, this is a non-trivial rigorous statement
about the dynamics of the H\'enon map
for the classical parameter values, obtained by computer-assisted methods.

The structure of the paper is as follows.
In Section~\ref{sec:defs} we introduce the definitions
which are at the foundations of our theory
of finite resolution dynamics, and we state and prove
our main abstract result. More specifically,
we give a precise formulation of Axioms (A) and (B),
define what we mean by the statement
that a certain abstract property is verified
at all resolutions coarser than some \( \varepsilon \),
and demonstrate the key fact that this can be rigorously proved
by computer-assisted arguments.
In Section~\ref{sec:transmix} we
 formulate natural notions of transitivity and mixing
in the finite resolution setting and prove that
these finite-resolution definitions satisfy
the required consistency conditions (B).
In Section~\ref{sec:algor} we provide explicit algorithms
for the verification of transitivity and mixing
at finite resolution, and thus in particular provide
a constructive proof that they satisfy
the computability conditions (A).
In Section~\ref{sec:appl} we describe
the particular application of our theory
to the H\'enon map and we prove that this map
is mixing at some finite but very fine scale.

This paper is accompanied by efficient and flexible software
programmed in the C++ language and made freely available
at~\cite{WWW} together with raw data for the applications
discussed in Section~\ref{sec:appl}.


\section{Abstract theory}
\label{sec:defs}

One possible approach to represent a dynamical system
in a finite way is based on \emph{partitioning},
that is, subdividing the phase space into a finite family
of compact sets with nonempty and disjoint interiors.
This approach goes back several decades and has proved
extremely successful for describing certain classes
of systems such as uniformly hyperbolic diffeomorphisms
and flows \cite{Bow70, BowRue75, Sin68}
and has been recently further developed
and used in various applications
(see \cite{AraKalKokMisOkaPil09, MisMro95,
MroPil02, Pil99, Szy97} for some examples).
However, working with this kind of discretization
in more general systems
may not well reflect the actual topology of the phase space.
Therefore, in this paper we introduce the idea
of using a structure more closely related
to the topology of the phase space: an \emph{open cover}.
As we shall see below, the non-trivial overlap
between elements of an open cover provides a crucial ingredient
for the development of a formal theory of finite resolution dynamics.

\subsection{Open covers}
\label{sec:covers}

Here and for the rest of the paper, \( X \) will always denote
a metric space; without loss of generality,
we shall assume that the metric is bounded.
This generality is convenient, because in applications,
like those considered in Section~\ref{sec:appl},
it may be necessary to work with the space $X$
being a bounded, not necessarily open, subset of $\bR^n$
with the induced metric.
An open ball centered at a point $x \in X$ and of radius $r > 0$
will be denoted by $B (x, r)$.
For a subset \( U \subset X \), its \emph{diameter} $\diam U$
is the supremum of the distances between any two of its elements.
Since $X$ is bounded, the diameter of any subset of $X$ is finite.
For a family $\cA$ of subsets of $X$
we denote their union $\bigcup_{A\in \cA} A$ by $|\cA|$.

\begin{definition}
A finite family $\cU$ of open subsets of $X$
such that $X = |\cU|$
is called a \emph{cover} of $X$.
\end{definition}

We are going to use the elements of $\cU$
as a finite approximation of the topology on~$X$.
In general, the cover \( \cU \) provides a better approximation if
it consists of smaller elements.

\begin{definition}
\label{def:outres}
The \emph{outer resolution}
\[
\cR^+ (\cU) := \max \{\diam (U) : U \in \cU\}
\]
of a cover $\cU$
is the maximal diameter of its elements.
\end{definition}

Intuitively, if two points $x, y \in X$ are at a distance
larger than the outer resolution of a cover
then they are well distinguished from each other
by the cover since they must belong to
distinct elements of the cover.

\begin{remark}
\label{rem:totbd}
For a totally bounded metric space (e.g., a bounded subset
of the Euclidean space with the inherited metric),
there exist covers with arbitrarily small outer resolution.
\end{remark}

\begin{definition}
\label{def:fincov}
We say that a cover $\cU_1$ of $X$
is \emph{finer} than another cover $\cU_2$ of $X$
(or, equivalently, $\cU_2$ is \emph{coarser} than $\cU_1$)
if every element of $\cU_1$ is contained in some element of $\cU_2$,
and every element of $\cU_2$ contains some element of $\cU_1$.
\end{definition}
Note that in this definition we require a mutual relation
between the elements of $\cU_1$ and $\cU_2$ in order to ensure
that $\cU_2$ is indeed a coarser description of the topology of $X$
and does not recognize any finer topological structures of the space
than $\cU_1$ does.
This relation between (essential) covers defines a partial order
on the space of (essential) covers of \( X \), so we can write
\( \cU_{1} \prec \cU_{2} \)
if \( \cU_{1} \) is finer than \( \cU_{2} \).

\medskip
In this paper we shall focus on \emph{essential} covers,
which are in some sense minimal,
that is, no element can be removed from them without leaving
a considerable part of $X$ outside the cover.
\begin{definition}
\label{def:esscov}
A cover $\cU$ of $X$ is \emph{essential}
if there exists $\varepsilon > 0$ such that
every $U \in \cU$ contains some point $x \in X$
such that $B (x, \varepsilon) \subset U$ and
$B (x, \varepsilon) \cap W = \emptyset$
for all $W \in \cU \setminus \{U\}$.
\end{definition}
Notice that if a cover is not essential,
i.e., there is an element \( U \) of the cover
that does not satisfy the condition above
for any \( \varepsilon > 0 \), then $ U$ must be
contained in the union of closures of the other elements
of the cover, and thus is in some sense superfluous;
therefore, covers that are not essential
may introduce a false feeling of the topology
of the space, and indeed, as will be made clear
in the sequel, some features of our theory
do not go through without this assumption.
Because of this reason, \textbf{from now on we shall
always work with essential covers}.

\begin{remark}
\label{rem:esscov}
From any cover it is possible to create an essential one
(without increasing $\cR^+$, but possibly decreasing the number
of elements); see Appendix~\ref{app:esscov} for details.
\end{remark}

\begin{remark}
For essential covers, the definition of finer and coarser covers given in
Definition \ref{def:fincov} reduces to the following:
A cover $\cU_1$ of $X$ is \emph{finer} than another cover $\cU_2$ of $X$
(or, equivalently, $\cU_2$ is \emph{coarser} than $\cU_1$)
if every element of $\cU_1$ is contained in some element of $\cU_2$.
Indeed, this automatically implies the second part:
every element of \( \cU_{2} \) contains some elements of \( \cU_{1} \).
To see this, just suppose by contradiction that there exists
some \( U_{2}\in \cU_{2} \) which does not fully contain
any element of \( \cU_{1} \).
Then, since every element of \( \cU_{1} \) is contained
in some element of \( \cU_{2} \),
it follows that \( \cU_{2}\setminus \{U_{2}\} \) is still a cover,
contradicting the fact that it is essential.
We thank Francesca Aicardi for this observation.
\end{remark}

\subsection{Combinatorial maps}
\label{sec:maps}

We shall use the symbol \( \multimap \) to denote
a possibly multivalued map between two sets.
Let \( X \) be a bounded metric space
and \( \cU \) an open cover of \( X \).

\begin{definition}
A \emph{combinatorial map} is a multivalued map
$\cF \colon \cU \multimap \cU$.
\end{definition}

Since \( \cU \) is finite, $\cF$ is a finite object
that can be represented in a purely combinatorial way.
If \( f\colon X \to X \) is a map, we can use combinatorial maps
to approximate the map \( f \).

\begin{definition}
\label{def:repr}
We say that a combinatorial map $\cF \colon \cU \multimap \cU$
is a \emph{representation} of a map $f \colon X \to X$
if for every $U \in \cU$ we have
\[
\cF (U)\supseteq \{W \in \cU : W \cap f (U) \neq \emptyset\}.
\]
\end{definition}

The ideal situation would be to have a representation
where we have an equality
\(\cF (U) = \{W \in \cU : W \cap f (U) \neq \emptyset\}\)
but computing such a representation
is generally not possible in practice
due to numerical approximation and computer round-off
errors in the calculation of a guaranteed outer bound for \( f(U) \):
see Remark~\ref{rem:minrep} below.
At the other extreme we have the trivial combinatorial map
which maps each \( U \) to all elements of the cover.
This is a combinatorial representation of \( f \)
but gives no information whatsoever about \( f \).
To control in some way this issue, and keeping in mind also that
generally \( \cF \) gives a better approximation
if it is defined on a finer cover, we introduce the following

\begin{definition}
\label{def:outresF}
The \emph{outer resolution}
\[
\cR^+ (\cF) := \max \{\diam U, \diam |\cF (U)| : U \in \cU\}
\]
of a combinatorial map
$\cF \colon \cU \multimap \cU$ is the larger of the maximum diameter
of the elements of the cover \( \cU\) and of the maximum
diameter of their images.
\end{definition}

If $f$ is a continuous map
with some Lipschitz constant $L \geq 0$
and the elements of $\cU$ are of similar size
(e.g., balls of the same radius),
then in principle one should expect
$\cR^+ (\cF) \approx \max\{1, L\} \cR^+ (\cU)$
for a reasonable representation $\cF$ of $f$.

\begin{definition}
\label{def:finmap}
Let $\cU_1$ and $\cU_2$ be two covers of $X$.
We say that $\cF_1\colon \cU_1 \multimap \cU_1$
is \emph{finer} than $\cF_2\colon \cU_2 \multimap \cU_2$
 (or, equivalently, \( \cF_{2} \) is \emph{coarser} than \( \cF_{1}\))
if $\cU_1$ is finer than $\cU_2$
and for every $U_1 \in \cU_1$
and every $U_2 \in \cU_2$ such that $U_1 \subset U_2$,
every element of $\cF_1 (U_1)$
is contained in some element of $\cF_2 (U_2)$.
\end{definition}

Intuitively, $\cF_1$ is finer than $\cF_2$
if the images of $\cF_1$ are smaller
than those of $\cF_2$. Indeed, it follows immediately
from the definition that if $\cF_1$ is finer than $\cF_2$
then $|\cF_1 (U_1)| \subset |\cF_2 (U_2)|$
for all $U_1 \in \cU_1$ and $U_2 \in \cU_2$
such that $U_1 \subset U_2$.
A key point of our approach is to develop some dynamical notions
which are actually computable. In particular,
to take advantage of the notions of combinatorial maps
we need to be able to compute such maps.

\begin{definition}
$f$ is \emph{\(\varepsilon\)-computable}
if there exists a cover $\cU$ of $X$
and a method for computing
a combinatorial representation $\cF \colon \cU \multimap \cU$
of $f$ with \( \cR^+(\cF) \leq \varepsilon\).
\end{definition}

This essentially depends on how explicitly we know
the map \( f \) and is generally not a serious issue:
see Remark~\ref{rem:comp}. We conclude this section
with a series of remarks concerning the definitions above.

\begin{remark}
Unlike in the case of covers, the relation of being finer
does not define a partial order between combinatorial maps.
Consider the following example:
$X = \{a, b, c\}$ with the discrete topology,
$\cU_1 = \big\{\{a\}, \{b\}, \{c\}\big\}$,
$\cU_2 = \big\{\{a, b\}, \{c\}\big\}$,
$\cU_3 = \big\{\{a, b\}, \{b, c\}\big\}$,
$\cF_1 \colon \{a\} \mapsto \big\{\{a\}\big\},
\{b\} \mapsto \big\{\{a\}\big\}, \{c\} \mapsto \big\{\{c\}\big\}$,
$\cF_2 \colon \{a, b\} \mapsto \big\{\{a, b\}\big\},
\{c\} \mapsto \big\{\{c\}\big\}$,
$\cF_3 \colon \{a, b\} \mapsto \big\{\{a, b\}\big\},
\{b, c\} \mapsto \big\{\{b, c\}\big\}$.
Obviously, $\cU_1 \prec \cU_2 \prec \cU_3$.
Moreover, $\cF_1$ is finer than $\cF_2$ and $\cF_2$ is finer than $\cF_3$.
However, $\cF_1$ is not finer than $\cF_3$,
because for $U_1 := \{b\} \subset U_2 := \{b, c\}$
the element $\{a\} \in \cF_1 (U_1)$
is not contained in any element of
$\cF_2 (U_2) = \big\{\{b, c\}\big\}$.
\end{remark}

\begin{remark}
\label{rem:minrep}
Every map $f \colon X \to X$ has a unique
\emph{minimal representation} relative to a given cover
\( \cU \) defined by
\[
\cF_{f, \cU} (U) :=
\{W \in \cU : W \cap f (U) \neq \emptyset\}.
\]
This is the obvious ``abstract'' definition
of a representation of \( f \) which would be natural
if it did not have to be explicitly and rigorously computed.
It is minimal in the sense that the image of any $U \in \cU$
by any other representation $\cF$ of $f$ contains
$\cF_{f, \cU} (U)$. We emphasize, however,
that although \emph{some} representation $\cF$ of $f$
can usually be computed in a relatively straightforward manner,
the \emph{minimal} representation for the same map
need not be computable at all. This is because,
in general, when using rigorous numerical methods,
the best one can typically compute
is some outer approximation $B^f_U$
of the image $f (U)$ of each $U \in \cU$.
Therefore, it is usually possible to compute
some $\cW^f_U$ containing all the elements of $\cU$
which intersect $B^f_U$,
good for constructing a representation of $f$.
However, in general it may be impossible to determine
which of the elements of $\cW^f_U$
actually do intersect $f (U)$ and which do not.
This justifies our definition of a representation of $f$
in which we do not require that it is actually
the \emph{minimal} representation,
and we allow that $\cF (U)$
contains some superfluous elements of $\cU$.
\end{remark}

\begin{remark}
The composition of combinatorial maps can be easily defined.
Formally, let \( 2^{\cU} \) denote the set of all possible
subsets of \( \cU \) and define the map
\( \widehat{\cF} \colon 2^{\cU} \to 2^{\cU} \)
as \( \widehat{\cF} (A) = \bigcup_{U\in A}\cF(U) \)
for any \( A \in 2^{\cU} \).
Then for all \( n\geq 1 \) we can write
\( \mathcal F^{n} = \widehat{\cF}^{n} \circ i \),
where \( i \colon \cU \to 2^{\cU} \) is the ``embedding''
\( i(U) := \{U\} \). In the same way we can define
the composition of two different combinatorial maps
\( \cF \) and \( \cG \) as long as they are both defined
on the same cover.
It is easy to see that if $\cF$ and $\cG$ are representations
of $f, g \colon X \to X$, respectively,
then $\cG \circ \cF$ is a representation of $g \circ f$.
In particular,
if $\cF$ is a representation of $f$
then $\cF^n$ is a representation of $f^n$ for every $n > 0$.
\end{remark}


\begin{remark}
\label{rem:compfin}
Note that if $\cF_1, \cG_1 \colon \cU_1 \multimap \cU_1$
are finer than $\cF_2, \cG_2 \colon \cU_2 \multimap \cU_2$,
respectively, then $\cG_1 \circ \cF_1$ is finer than
$\cG_2 \circ \cF_2$.
In particular, if $\cF_1$ is finer than $\cF_2$
then also the same relation holds for iterations of these maps,
that is, $\cF_1^n$ is finer than $\cF_2^n$.
Indeed, take any $U_1 \in \cU_1$ and any $U_2 \in \cU_2$
such that $U_1 \subset U_2$.
Take any $W_1 \in \cG_1 (\cF_1 (U_1))$.
There exists $V_1 \in \cF_1 (U_1)$
such that $W_1 \in \cG_1 (V_1)$.
Since $\cF_1$ is finer than $\cF_2$,
and $U_1 \subset U_2$, for this $V_1 \in \cF_1 (U_1)$
there exists some $V_2 \in \cF_2 (U_2)$
such that $V_1 \subset V_2$.
Since $\cG_1$ is finer than $\cG_2$,
and $V_1 \subset V_2$, for this $W_1 \in \cG_1 (V_1)$
there exists some $W_2 \in \cG_2 (V_2)$
such that $W_1 \subset W_2$.
As a consequence, $W_1 \subset W_2$,
where $W_2 \in \cG_2 (\cF_2 (U_2))$.
Note that this proof does not go through
if we replace ``and every $U_2 \in \cU_2$''
with ``there exists $U_2 \in \cU_2$''
in Definition~\ref{def:finmap}.
\end{remark}

\begin{remark}
\label{rem:comp}
We make some brief remarks on the computability
of combinatorial maps.
If $f \colon \bR^n \to \bR^n$ is a continuous map
defined by means of an explicit formula
involving only elementary operations
(addition, multiplication, etc.)
and simple arithmetic functions
(like the trigonometric functions, $\sqrt{x}$, or $e^x$),
and an open bounded set $X \subset \bR^n$ can be found
such that $f (X) \subset X$,
then one can compute a finite representation
of $f \colon X \to X$ using the concept
of interval analysis \cite{Moo66}.
For a cover $\cU$ of $X$
that consists of products of open intervals,
a set $B^f_U$ containing $f (U)$
can be computed for each $U \in \cU$,
where $B^f_U$ is also a product of open intervals.
Then we define $\cW^f_U$ to be the union
of those elements of $\cU$ which intersect $B^f_U$
(such a set can be computed easily).
The multivalued map
$\cF \colon \cU \ni U \mapsto \cW^f_U \subset \cU$
is then a combinatorial representation of $f$ on $X$.
Obviously, the smaller the elements of the cover $\cU$ are taken,
the smaller the outer resolution of the constructed representation
should be expected.
This argument proves that a finite representation of dynamics
for a wide class of maps is computable at virtually any resolution.
We shall use the idea described above in Section~\ref{sec:appl}
for the analysis of the H\'enon map.
\end{remark}

\subsection{The axioms of finite resolution dynamics}
\label{sec:axioms}

We are now ready to formalize the axioms
of computability and consistency
introduced in Section~\ref{sec:intro},
based on the notion of combinatorial maps.
Let $\cP := \cP (\cF)$ be a predicate
concerning a combinatorial map \( \cF \).

\begin{definition}
We say that \( \cP \) is a \emph{finite resolution property}
if it satisfies the following two conditions:

\begin{description}
\item[(A) Computability]
\( \cP \) is \emph{computable}, that is,
there exists an algorithm which
for any combinatorial map $\cF$
can establish in finite time
whether $\cF$ satisfies \( \cP \) or not.

\item[(B) Consistency]
\( \cP \) is \emph{consistent}, that is,
 for any pair of combinatorial maps $\cF_1$ and $\cF_2$
such that $\cF_1$ is finer than $\cF_2$,
we have $\cP (\cF_1) \implies \cP (\cF_2)$.
\end{description}
\end{definition}

We remark that the definition is given purely
in terms of combinatorial maps with no reference
to the underlying space or any map whose representation $\cF$ might be.
Thus we have an \emph{a priori} notion of what it means
for a property \( \cP \) to be an ``acceptable''
property for the investigation of finite resolution dynamics.
In general we will apply this definition to combinatorial maps
which arise as representations of some particular
map \( f\colon X \to X \) on a metric space \( X \)
and thus obtain some coherent statements
about the ``finite resolution'' dynamics of the map \( f \).

We make here a simple observation which is crucial
to motivate the theory. Part (B) in the definition above
is not in itself sufficient to define what it means
for the property \( \cP \) to hold
at ``all resolutions \( > \varepsilon \)''.
Indeed, assuming that \( \cP \) holds for some
combinatorial representation \( \cF \)
with respect to some cover \( \cU \) with outer resolution
\( \cR^+(\cF) \leq \varepsilon \), it is easy to construct
another combinatorial representation \( \tilde {\cF} \)
with \( \cR^+(\tilde{\cF}) > \varepsilon \)
but which is not coarser than \( \cF \)
and therefore is not automatically
guaranteed to satisfy \( \cP \).
In other words, choosing a cover with a larger resolution
does not necessarily guarantee that the cover is coarser,
and therefore the consistency condition
does not provide any information.
We need some notion which guarantees
that \( \cP \) holds for \textbf{all} combinatorial maps
at sufficiently coarse scales,
based on the fact that \( \cP \) has been verified
at some scale for a \textbf{single}
particular choice of combinatorial map.
To formulate this notion
we need to introduce an additional definition.

\subsection{Inner Resolution}

The following definition and its application constitute
in some sense the key idea of this paper.

\begin{definition}
\label{def:innres}
The \emph{inner resolution} of a cover $\cU$ is
\[
\cR^- (\cU) := \sup \{d \geq 0 : \forall x \in X \;
\exists U \in \cU : B (x, d) \subset U\}.
\]
The \emph{inner resolution} of a combinatorial map
$\cF \colon \cU \multimap \cU$ is
\[
\cR^- (\cF) := \cR^- (\cU).
\]
\end{definition}

Positive inner resolution is a crucial feature
that distinguishes our approach from one
based on partitions, where the inner resolution is $0$.

\medskip
We have used the convention here that \( B(x, 0)=\{x\}\)
so that the inner resolution
is the supremum of the numbers $d > 0$ such that every ball
$B (x, d) \subset X$ is contained in some $U \in \cU$,
or $0$ if such $d > 0$ does not exist.
In a connected space, the quantity $\cR^- (\cU)$ can be interpreted
as the minimal width of overlapping
between adjacent elements of $\cU$.
Intuitively, if the distance between two points $x, y \in X$
is smaller than the inner resolution of a cover
then these points can be identified
by means of belonging to a common element of the cover.

\begin{remark}
\label{rem:lebesgue}
If $X$ is compact then we can apply Lebesgue's number lemma
to any open cover $\cU$ of $X$ in order to know that there exists
a number $\delta = \delta (\cU) > 0$ such that every subset of $X$
whose diameter does not exceed $\delta$ is contained
in some element of the cover. Then obviously
$\cR^- (\cU) \geq \delta / 2 > 0$.
\end{remark}

\begin{remark}
\label{rem:covres}
In certain cases, e.g., if $X$ is a bounded subset
of a Euclidean space, for any $\varepsilon \geq \delta > 0$
it is possible to construct a cover $\cU$ of $X$
such that $\cR^+ (\cU) \leq 2 \varepsilon$
and $\cR^- (\cU) \geq \delta$.
In particular, this is true if there exists a finite number of points
$\{x_1, \ldots, x_n\} \subset X$ such that
$\cU_0 := \{B (x_i, \varepsilon - \delta) : i = 1, \ldots, n\}$
is a cover of $X$. Indeed, let us consider
$\cU := \{B (x_i, \varepsilon) : i = 1, \ldots, n\}$.
Obviously, $\cU$ is a cover of $X$
and $\cR^+ (\cU) \leq 2 \varepsilon$.
Moreover, every $x \in X$ is within the distance
of $r < \varepsilon - \delta$ from $x_k$
for some $k \in \{1, \ldots, n\}$,
and thus $B (x, \delta) \subset B (x_k, \varepsilon)$
by the triangle inequality, so $\cR^- (\cU) \geq \delta$.
\end{remark}

\begin{figure}[htbp]
\includegraphics[height=5cm]{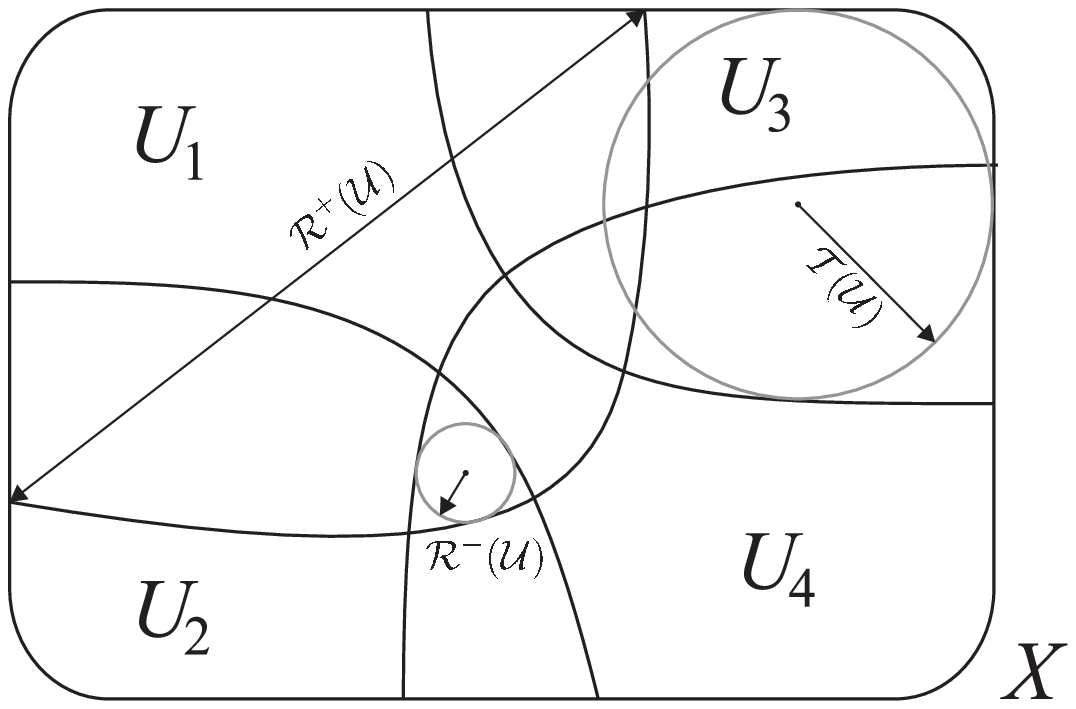}
\caption{An illustration of the quantities \( \cR^+ (\cU) \),
\( \cR^- (\cU) \) in Definitions \ref{def:outres} and~\ref{def:innres},
and \( \cT (\cU) \) in Remark~\ref{rem:resol},
for a sample cover $\cU = \{U_1, U_2, U_3, U_4\}$.}
\label{fig:cover}
\end{figure}

\begin{remark}
\label{rem:resol}
The inner resolution also provides a lower bound
for how ``thick'' the cover elements are
in terms of containing a ball of a big enough radius.
Define \( \cT (\cU) := \sup \{d > 0 : \forall U \in \cU \;
\exists x \in X : B (x, d) \subset U\} \).
(See Figure~\ref{fig:cover} for an elementary example.)
It is easy to see that for an essential cover $\cU$ of $X$
we have \( \cR^- (\cU) \leq \cT (\cU) \).
Indeed, take any positive number $d < \cR^- (\cU)$.
Take any $U \in \cU$. By the assumption that $\cU$ is essential,
there exists $x \in U$ such that for some $\varepsilon > 0$
the ball $B (x, \varepsilon)$ is contained in $U$
and disjoint from all the other elements of the cover $\cU$.
Since $d < \cR^- (\cU)$, for this particular $x$
there exists some $U' \in \cU$
such that $B (x, d) \subset U'$.
However, $U$ is the only element of $\cU$
that contains $x$, so clearly $U' = U$,
and thus $B (x, d) \subset U$.
This shows that for every $U \in \cU$
there exists an $x \in \cU$ such that $B (x, d) \subset U$.
Since $d < \cR^- (\cU)$ can be arbitrarily close to $\cR^- (\cU)$,
the supremum of these numbers
is greater than or equal to $\cR^- (\cU)$.
\end{remark}

\subsection{Finite resolution properties
for all resolutions \( > \varepsilon \)}

We are now ready to state precisely what we mean
when we say that a property holds at all resolutions
coarser than some \( \varepsilon \), and to state
and prove our main abstract theorem.
Let \( \cP \) be a finite resolution property and let
\( f \colon X \to X \) be a map
for which there exists a representation
whose outer resolution is $\leq \varepsilon$.

\begin{definition}
\label{def:allres}
\( f \) \emph{satisfies \( \cP \)
at all resolutions \( > \varepsilon \)}
if \( \cP \) is satisfied for every representation
\( \cF \colon \cU \multimap \cU \) of $f$
with \( \cR^-(\cF)> \varepsilon\).
\end{definition}

We are using here the notion of inner resolution
to quantify the resolution of a cover,
but recall from Remark \ref{rem:covres}
that in many cases the inner and outer resolution
can be chosen arbitrarily close (up to the factor of $2$).
Notice also that the definition requires \( \cP \) to hold
for \emph{any} cover \( \cU \)
and \emph{any} combinatorial representation
\( \cF\colon \cU \multimap \cU \)
as long as \( \cR^-(\cF)> \varepsilon\).
It is therefore \emph{a priori} an unverifiable condition.
However, we have the following

\begin{theorem}
\label{thm:allres}
Suppose \( \cP \) is a finite resolution property.
If $\cP$ holds for a
representation $\cF_0$ of $f$
with $\cR^+ (\cF_0) \leq \varepsilon$
then $\cP$ holds for all
representations $\cF$ of $f$
with $\cR^- (\cF) > \varepsilon$.
Thus $f$ satisfies $\cP$
at all resolutions $> \varepsilon$.
\end{theorem}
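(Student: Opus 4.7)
The plan is to deduce the conclusion directly from the consistency axiom by showing that \( \cF_0 \) is finer than any representation \( \cF \) of \( f \) with \( \cR^-(\cF) > \varepsilon \); then consistency (B) immediately gives \( \cP(\cF_0) \implies \cP(\cF) \). So the work reduces to verifying the two clauses of Definition~\ref{def:finmap} for the pair \( (\cF_0, \cF) \), using only the quantitative hypotheses \( \cR^+(\cF_0) \leq \varepsilon \) and \( \cR^-(\cF) > \varepsilon \), and the fact that both maps are representations of the same \( f \).

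First, I verify that \( \cU_0 \) is finer than \( \cU \). Fix \( U_0 \in \cU_0 \) and pick any \( x \in U_0 \). Since \( \cR^-(\cU) > \varepsilon \), there is some \( d > \varepsilon \) such that \( B(x, d) \subset U \) for some \( U \in \cU \). For any \( y \in U_0 \), we have \( d(x,y) \leq \diam U_0 \leq \cR^+(\cF_0) \leq \varepsilon < d \), so \( U_0 \subset B(x, d) \subset U \). Since both covers are essential, this single inclusion already establishes \( \cU_0 \prec \cU \) by the remark following Definition~\ref{def:fincov}.

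Second, and this is the crux of the argument, I verify the map clause. Fix \( U_0 \in \cU_0 \) and \( U \in \cU \) with \( U_0 \subset U \), and let \( W_0 \in \cF_0(U_0) \). The potential obstacle is that \( W_0 \) could be a superfluous element of \( \cF_0(U_0) \), i.e., one with \( W_0 \cap f(U_0) = \emptyset \), so we cannot directly pick a base point in \( W_0 \cap f(U) \). The trick is to use a witness coming from \( f(U_0) \) instead: choose any \( x_0 \in U_0 \) and set \( y_0 := f(x_0) \in f(U_0) \). Since \( \cU_0 \) covers \( X \), \( y_0 \) belongs to some \( W' \in \cU_0 \); that \( W' \) intersects \( f(U_0) \), so by Definition~\ref{def:repr} we have \( W' \in \cF_0(U_0) \), and hence \( y_0 \in W' \subset |\cF_0(U_0)| \). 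Therefore, for any \( z \in W_0 \subset |\cF_0(U_0)| \),
\[
d(z, y_0) \leq \diam |\cF_0(U_0)| \leq \cR^+(\cF_0) \leq \varepsilon.
\]
Now invoke \( \cR^-(\cF) = \cR^-(\cU) > \varepsilon \): there exist \( d > \varepsilon \) and \( W \in \cU \) with \( B(y_0, d) \subset W \). The previous inequality gives \( W_0 \subset B(y_0, d) \subset W \). Finally, since \( y_0 \in f(U_0) \subset f(U) \) and \( y_0 \in W \), we have \( W \cap f(U) \neq \emptyset \), so \( W \in \cF(U) \) because \( \cF \) is a representation of \( f \). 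Thus \( W_0 \) is contained in the element \( W \in \cF(U) \), as required.

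Having established that \( \cF_0 \) is finer than \( \cF \), the consistency axiom (B) gives \( \cP(\cF_0) \implies \cP(\cF) \). Since \( \cF \) was an arbitrary representation of \( f \) with \( \cR^-(\cF) > \varepsilon \), Definition~\ref{def:allres} yields the conclusion. The only nontrivial point in the whole argument is the superfluous-element issue resolved above; everything else is a direct translation of the inner/outer-resolution inequalities into set inclusions.
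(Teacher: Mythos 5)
Your proof is correct and follows essentially the same route as the paper's: use the resolution inequality $\cR^+(\cF_0) \leq \varepsilon < \cR^-(\cF)$ to show that $\cF_0$ is finer than $\cF$, then invoke consistency. The paper packages the argument into two lemmas (one for covers, one for maps), whereas you merge them and make explicit the observation that some point of $f(U_0)$ lies in $|\cF_0(U_0)|$ --- a fact the paper uses implicitly when asserting that every element of $\cF_1(U_1)$ is contained in $B(f(x),r)$.
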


\begin{proof}
We present the proof in the form of two lemmas.
\begin{lemma}
\label{lem:fincov}
Let $\cU_1$ and $\cU_2$ be two covers of~$X$.
If $\cR^+ (\cU_1) < \cR^- (\cU_2)$
then $\cU_1 \prec \cU_2$.
\end{lemma}
\begin{proof}
Take any number $r$ such that
$\cR^+ (\cU_1) < r < \cR^- (\cU_2)$.
Let $U_1 \in \cU_1$.
Consider a ball $B (x, r)$ for some $x \in U_1$.
From the fact that $r > \cR^+ (\cU_1)$
it follows that $U_1 \subset B (x, r)$.
Since $r < \cR^- (\cU_2)$,
there exists $U_2 \in \cU_2$
such that $B (x, r) \subset U_2$,
and thus $U_1 \subset U_2$.
Now consider any $U_2 \in \cU_2$.
Then (by Remark~\ref{rem:resol})
there exists a ball $B (x, r) \subset U_2$.
Since $\cU_1$ is a cover of $X$,
there exists $U_1 \in \cU_1$ such that $x \in U_1$.
Since $\diam U_1 < r$, we have $U_1 \subset B(x, r)$,
and thus $U_1 \subset U_2$.
\end{proof}
\begin{lemma}
\label{lem:finmap}
If $\cU_1$ and $\cU_2$ are two covers of $X$,
$\cF_1 \colon \cU_1 \multimap \cU_1$ is a representation
of some map $f \colon X \to X$,
and $\cR^- (\cU_2) > \cR^+ (\cF_1)$,
then any representation
$\cF_2 \colon \cU_2 \multimap \cU_2$ of $f$
is coarser than $\cF_1$.
\end{lemma}
\begin{proof}
By Lemma \ref{lem:fincov},
we have $\cU_1 \prec \cU_2$.
Take any $U_1 \in \cU_1$ and any $U_2 \in \cU_2$
such that $U_1 \subset U_2$.
Take $x \in U_1$. Take a number $r$ such that
$\cR^+ (\cF_1) < r < \cR^- (\cU_2)$.
Then there exists $W_2 \in \cU_2$
such that $B (f (x), r) \subset \cW_2$.
Since $\cF_2$ is a representation of $f$ and $x \in U_2$,
$\cF_2 (U_2)$ contains all the elements of $\cU_2$
which contain $f (x)$, including $W_2$.
On the other hand,
since $\diam |\cF_1 (U_1)| < r$,
every $W_1 \in \cF_1 (U_1)$ is contained
in $B (f (x), r) \subset W_2$.
This proves that indeed $\cF_2$ is coarser than $\cF_1$.
\end{proof}
The statement in the Theorem now follows immediately
from Lemma~\ref{lem:finmap} and the definitions.
\end{proof}

\begin{remark}
\label{lem:advantage}
Lemmas \ref{lem:fincov} and \ref{lem:finmap}
show a tremendous advantage of using open covers
as opposed to partitions, because there is no easy condition
defined in terms of the features of a partition itself
which would guarantee that this partition and any map on it
are coarser than some other partition and some other map,
respectively, both possibly given \emph{a priori} (e.g., as a result
of some computation, as we do in Section~\ref{sec:appl}).
\end{remark}

\begin{remark}
\label{rem:partitions}
If one prefers to do computations for partitions
instead of using open covers, then $\cF_0$ in the assumptions
of Theorem~\ref{thm:allres} can in fact be a multivalued map
on a partition.
Definition~\ref{def:outresF} of the outer resolution
in this case can be applied directly,
Definition~\ref{def:repr} of a representation can be taken
the same and is equivalent to requesting
that $f (U) \subset \interior |\cF (U)|$,
and the predicate $\cP$ can also be the same
as for a combinatorial map on a cover.
Note, however, that the conclusion of Theorem~\ref{thm:allres}
is not valid for partitions, because their inner resolution is $0$.
\end{remark}


\section{Transitivity and mixing}
\label{sec:transmix}

Let \( \cF\colon \cU \multimap \cU \) be a combinatorial map,
and let $\cF^{-1}$ denote its inverse
defined by the following condition:
$V \in \cF^{-1} (U)$ if and only if $U \in \cF (V)$.
We introduce the notions of transitivity and mixing
for combinatorial maps,
further also called combinatorial transitivity
and combinatorial mixing, respectively.

\begin{definition}
$\cF$ is \emph{transitive} if for every
$U, V \in \cU$ there exists $n > 0$
such that $V \in \cF^{-n} (U)$.
$\cF$ is \emph{mixing} if for every $U, V \in \cU$
there exists $N > 0$ such that $V \in \cF^{-n} (U)$
for all $n > N$.
\end{definition}

We shall prove the following

\begin{proposition}
\label{prop:tranmix}
Combinatorial transitivity and combinatorial mixing
are finite resolution properties.
\end{proposition}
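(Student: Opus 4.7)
The plan is to handle the two axioms separately. Both combinatorial transitivity and combinatorial mixing admit clean graph-theoretic reformulations that will handle computability, while consistency will follow from a path-lifting argument that crucially exploits the essentialness of the covers.

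For \emph{computability}, I would associate to every combinatorial map $\cF \colon \cU \multimap \cU$ the finite directed graph $G_\cF = (\cU, E)$ with $(U, W) \in E$ iff $W \in \cF(U)$. Since $V \in \cF^{-n}(U)$ is equivalent to the existence of a directed walk of length $n$ from $V$ to $U$ in $G_\cF$, transitivity of $\cF$ amounts to $G_\cF$ being strongly connected, while mixing of $\cF$ amounts to $G_\cF$ being strongly connected and aperiodic (the GCD of the lengths of its directed cycles equals $1$); the latter equivalence uses finiteness of $\cU$ to promote the pairwise bound $N(U,V)$ in the definition of mixing to a uniform $N$. Both conditions are decidable in finite (indeed polynomial) time by standard graph algorithms, with explicit implementations deferred to Section~\ref{sec:algor}.

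For \emph{consistency}, suppose $\cF_1 \colon \cU_1 \multimap \cU_1$ is finer than $\cF_2 \colon \cU_2 \multimap \cU_2$. I would first establish a \emph{pinning lemma}: for every $U_2 \in \cU_2$ there exists $U_1 \in \cU_1$ with $U_1 \subset U_2$ and such that any $W_2 \in \cU_2$ satisfying $U_1 \subset W_2$ must equal $U_2$. Its proof uses essentialness of $\cU_2$ to select a point $x \in X$ whose small ball lies inside $U_2$ and misses every other element of $\cU_2$; then any $U_1 \in \cU_1$ containing $x$ does the job, since every $\cU_2$-element that contains $U_1$ also contains $x$ and hence equals $U_2$. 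Given this lemma and assuming $\cF_1$ is transitive, I fix $U_2, V_2 \in \cU_2$ and pick $U_1 \subset U_2$, $V_1 \subset V_2$ with the pinning property. An $\cF_1$-walk $V_1 = W_0, W_1, \ldots, W_n = U_1$ can then be lifted inductively: set $\tilde W_0 := V_2$, and given $\tilde W_i \in \cU_2$ with $W_i \subset \tilde W_i$, use that $\cF_1$ is finer than $\cF_2$ (applied to $W_i \subset \tilde W_i$ and $W_{i+1} \in \cF_1(W_i)$) to pick $\tilde W_{i+1} \in \cF_2(\tilde W_i)$ with $W_{i+1} \subset \tilde W_{i+1}$. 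At the terminal step $U_1 = W_n \subset \tilde W_n$, and the pinning property forces $\tilde W_n = U_2$, so $V_2 \in \cF_2^{-n}(U_2)$. The mixing case is identical: mixing of $\cF_1$ yields $\cF_1$-walks of every length $n > N$ from $V_1$ to $U_1$, each of which lifts to an $\cF_2$-walk from $V_2$ to $U_2$ of the same length.

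The main obstacle will be precisely the terminal identification $\tilde W_n = U_2$: the lifting procedure on its own only guarantees $\tilde W_n \in \cU_2$ with $U_1 \subset \tilde W_n$, and a priori there could be several candidates. The pinning lemma, which depends critically on essentialness and would fail in general, is exactly what resolves this ambiguity and makes consistency go through; this also clarifies why the theory is systematically developed for essential covers throughout the paper.
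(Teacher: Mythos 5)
Your proposal is correct, and the essentialness ingredient (your ``pinning lemma'') is exactly the ball argument the paper uses, but the surrounding machinery is organized differently. For consistency the paper does not lift walks step by step: it invokes the fact (from the discussion of Definition~\ref{def:finmap} together with Remark~\ref{rem:compfin}) that $\cF_1$ finer than $\cF_2$ forces $|\cF_1^n(V_1)|\subset|\cF_2^n(V_2)|$ for all $n$, and then uses the essentialness ball in $U_2$ to force $U_2\in\cF_2^n(V_2)$ from $U_1\subset|\cF_2^n(V_2)|$ — a ``global'' set-inclusion argument rather than your edge-by-edge lifting, though the two are of course morally equivalent and both hinge on essentialness in the same way. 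More substantially, the paper treats transitivity and mixing \emph{separately}: for mixing it first proves an auxiliary characterization (Lemmas~\ref{lem:mixing-all} and \ref{lem:mixing-equiv}, namely that $\cF$ is mixing iff $\cF^k(U)=\cU$ for some $k$ and all $U$) and then shows this $k$ transfers from $\cF_1$ to $\cF_2$ using essentialness of $\cU_2$ to conclude $\cF_2^k(W)=\cU_2$ for all $W$. Your unified lifting argument is arguably cleaner and avoids this detour, but the $\cF^k(U)=\cU$ characterization is reused in the paper to prove Lemma~\ref{lem:graph} (the strong-connectedness-plus-aperiodicity equivalence that you assert for computability), so it is not wasted effort there. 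On the computability side your reduction to strong connectedness and aperiodicity is exactly what the paper establishes in Section~\ref{sec:algor}; the paper's stated justification for axiom (A) at the level of Proposition~\ref{prop:tranmix} is the weaker observation that the conditions are finitary, with the efficient graph-theoretic algorithms deferred just as you defer them.
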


To prove the proposition we need to prove the
computability condition (A) and the consistency condition (B)
in the definition of finite resolution property.
Condition (A) says that the properties
of (combinatorial) transitivity and mixing
are algorithmically verifiable
in finite time. This is almost immediate from the definitions
and Lemma~\ref{lem:mixing-all} below,
since combinatorial transitivity and mixing
only require checking a finite number of conditions
(because $\cU$ is finite);
however, in Section~\ref{sec:algor}
we provide explicit algorithms
which can be used to verify the conditions
in an efficient way in practice.
In this section we prove (B).

Notice first that
mixing implies transitivity
but the converse is not true in general.
Moreover, since $V \in \cF^{-n} (U)$
is equivalent to $U \in \cF^n (V)$,
one can equivalently formulate the combinatorial conditions
for the $n$\nobreakdash-th iterate of~$\cF$
instead of the \mbox{$n$\nobreakdash-th} preimage of~$\cF$,
thanks to the symmetry between $U$ and $V$ in these conditions.
We shall use this easy observation transparently in the sequel.
We shall consider transitivity and mixing separately.

\begin{lemma}
\label{lem:trans-mon}
Combinatorial transitivity is a consistent property.
\end{lemma}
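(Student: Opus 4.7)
The plan is to show that if $\cF_1 \colon \cU_1 \multimap \cU_1$ is finer than $\cF_2 \colon \cU_2 \multimap \cU_2$ and $\cF_1$ is combinatorially transitive, then so is $\cF_2$. Fix arbitrary $U_2, V_2 \in \cU_2$; I must produce $n > 0$ with $U_2 \in \cF_2^n(V_2)$. The overall strategy is to lift $U_2$ and $V_2$ to suitably chosen elements $U_1, V_1 \in \cU_1$ with $U_1 \subset U_2$ and $V_1 \subset V_2$, apply the transitivity of $\cF_1$ to obtain $U_1 \in \cF_1^n(V_1)$ for some $n$, and then push this relation down to $\cF_2$ using Remark~\ref{rem:compfin}.

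To choose $U_1$, I would use the essentiality of $\cU_2$ (Definition~\ref{def:esscov}) to pick a ``witness point'' $x \in U_2$ that lies in no other element of $\cU_2$, and then let $U_1 \in \cU_1$ be any element of the finer cover containing $x$. Since $\cU_1 \prec \cU_2$, this $U_1$ is contained in some element of $\cU_2$; but that element must also contain $x$, so it is forced to be $U_2$, giving $U_1 \subset U_2$. For $V_1$, I simply invoke Definition~\ref{def:fincov} to get any $V_1 \in \cU_1$ with $V_1 \subset V_2$. Transitivity of $\cF_1$ then yields $n > 0$ with $U_1 \in \cF_1^n(V_1)$, and Remark~\ref{rem:compfin} (applied to the composed map $\cF_1^n$, which is finer than $\cF_2^n$) guarantees the existence of $W_2 \in \cF_2^n(V_2)$ with $U_1 \subset W_2$. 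Applying the witness-point trick once more, $x \in U_1 \subset W_2$ forces $W_2 = U_2$, so $U_2 \in \cF_2^n(V_2)$, as required.

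The main obstacle, and the reason essentiality of covers is really needed, is precisely this final identification. The fineness relation by itself only tells us that iterates propagate up to \emph{some} element of the coarser cover, not necessarily the one we care about. The distinguished points supplied by essentiality are what pin down which element of $\cU_2$ appears, both in the initial choice of $U_1 \subset U_2$ and in the conclusion $W_2 = U_2$; without this pinning, the chain of inclusions produced by Remark~\ref{rem:compfin} would leave us with an indeterminate element of $\cF_2^n(V_2)$ containing $U_1$, and the argument would not close. Everything else is routine bookkeeping with the definitions.
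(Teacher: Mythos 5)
Your proof is correct and follows essentially the same approach as the paper: both exploit the essentiality of $\cU_2$ to find a distinguished region of $U_2$ not shared with any other element of $\cU_2$, lift to a suitable $U_1 \subset U_2$, apply transitivity of $\cF_1$ together with the fineness of $\cF_1^n$ over $\cF_2^n$, and then use the exclusive region again to force the resulting element of $\cF_2^n(V_2)$ to be $U_2$. The only cosmetic difference is that the paper works with the exclusive ball $B$ itself and the inclusion $|\cF_1^n(V_1)| \subset |\cF_2^n(V_2)|$, whereas you track a single witness point and a single containing element $W_2$; these are interchangeable.
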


\begin{proof}
Let $\cU_1$ and $\cU_2$ be two covers of $X$,
with $\cU_1$ finer than $\cU_2$.
Let $\cF_1 \colon \cU_1 \multimap \cU_1$
and $\cF_2 \colon \cU_2 \multimap \cU_2$
be combinatorial maps.
Assume that $\cF_1$ is finer than $\cF_2$
and that $\cF_1$ is transitive.
We shall show that under these assumptions
also $\cF_2$ is transitive.
Let $U_2, V_2 \in \cU_2$.
Let $B$ be a ball contained in $U_2$
and disjoint from all the other elements of $\cU_2$;
the existence of such a ball follows
from the fact that $\cU_2$ is essential.
Let $U_1, V_1 \in \cU_1$ be such that
$U_1 \cap B \neq \emptyset$ and $V_1 \subset V_2$.
Obviously, in this case $U_1 \subset U_2$.
By the assumption on transitivity of $\cF_1$,
there exists $n > 0$ such that $U_1 \in \cF_1^n (V_1)$.
Since $|\cF_1^n (V_1)| \subset |\cF_2^n (V_2)|$,
we know that $U_1 \subset |\cF^n_2 (V_2)|$.
In particular, $U_1 \cap B \subset |\cF^n_2 (V_2)|$,
and therefore $U_2 \in \cF^n_2 (V_2)$,
because otherwise no part of $B$ could have been covered
by $\cF^n_2 (V_2)$.
Since the choice of $U_2, V_2 \in \cU_2$ was arbitrary,
this proves that $\cF_2$ is transitive.
\end{proof}

\begin{lemma}
\label{lem:mixing-mon}
Combinatorial mixing is a consistent property.
\end{lemma}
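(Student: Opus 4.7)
The plan is to parallel the proof of Lemma~\ref{lem:trans-mon} almost verbatim, the essential modification being that the existential quantifier ``there exists $n$'' is replaced by ``there exists $N$ such that for all $n > N$,'' and to verify that the argument runs uniformly in $n$.

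More precisely, I would let $\cU_1 \prec \cU_2$ be essential covers of $X$, and suppose $\cF_1 \colon \cU_1 \multimap \cU_1$ is finer than $\cF_2 \colon \cU_2 \multimap \cU_2$, with $\cF_1$ combinatorially mixing. Fix arbitrary $U_2, V_2 \in \cU_2$. By essentiality of $\cU_2$, I would choose a ball $B \subset U_2$ disjoint from every other element of $\cU_2$. Then I would pick $U_1 \in \cU_1$ with $U_1 \cap B \neq \emptyset$ (so $U_1 \subset U_2$ since $\cU_1 \prec \cU_2$ and $U_1$ meets the ball $B$ that is isolated from other elements of $\cU_2$), and $V_1 \in \cU_1$ with $V_1 \subset V_2$.

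Now I would invoke the mixing hypothesis for $\cF_1$ applied to the fixed pair $(V_1, U_1)$: there exists $N > 0$ such that $U_1 \in \cF_1^n(V_1)$ for every $n > N$. By Remark~\ref{rem:compfin}, iterations preserve the ``finer than'' relation, so $|\cF_1^n(V_1)| \subset |\cF_2^n(V_2)|$ for every $n$. In particular, for each $n > N$, the nonempty set $U_1 \cap B$ is contained in $|\cF_2^n(V_2)|$; since $B$ is disjoint from every element of $\cU_2$ other than $U_2$, this forces $U_2 \in \cF_2^n(V_2)$, i.e., $V_2 \in \cF_2^{-n}(U_2)$. As $U_2, V_2 \in \cU_2$ were arbitrary, $\cF_2$ is mixing.

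The point that deserves care, and which I expect to be the only real subtlety, is that one must freeze the pair $(U_1, V_1)$ \emph{before} invoking mixing of $\cF_1$, so that the threshold $N$ is obtained for this single pair and therefore works simultaneously for all $n > N$ when transferring to $(U_2, V_2)$. Everything else -- the use of essentiality to produce the isolating ball $B$, and the passage from $U_1 \in \cF_1^n(V_1)$ to $U_2 \in \cF_2^n(V_2)$ -- is the same geometric observation used in Lemma~\ref{lem:trans-mon} and requires no new ingredient.
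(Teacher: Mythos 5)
Your proof is correct, but it takes a genuinely different route from the paper's. The paper first establishes an equivalent characterization of combinatorial mixing (Lemma~\ref{lem:mixing-equiv}: $\cF$ is mixing iff there exists a single $k$ with $\cF^k(U)=\cU$ for all $U$), then argues uniformly: pick such a $k$ for $\cF_1$, observe that for any $W\in\cU_2$ and any $U\in\cU_1$ with $U\subset W$ one has $X=|\cF_1^k(U)|\subset|\cF_2^k(W)|$, and use essentiality of $\cU_2$ to conclude $\cF_2^k(W)=\cU_2$ for every $W$, whence $\cF_2$ is mixing. You instead parallel the transitivity proof directly, working pair by pair with the raw definition: you freeze $(U_2,V_2)$, build the isolating ball $B\subset U_2$, select $U_1\cap B\neq\emptyset$ and $V_1\subset V_2$, invoke mixing of $\cF_1$ for the single pair $(V_1,U_1)$ to get one $N$, and then use Remark~\ref{rem:compfin} to transfer $U_1\in\cF_1^n(V_1)$ to $U_2\in\cF_2^n(V_2)$ for all $n>N$. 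Your route is more elementary in that it does not need the auxiliary characterization lemma at all and makes the analogy with Lemma~\ref{lem:trans-mon} transparent; the paper's route is notationally tidier because a single $k$ works simultaneously for all pairs, and the characterization lemma is reused later anyway (in Lemma~\ref{lem:graph}), so the paper gets it for free. You have also correctly flagged the one genuine subtlety, namely that the pair $(U_1,V_1)$ must be fixed before invoking mixing so that the resulting threshold $N$ is uniform in $n>N$.
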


Before proving Lemma~\ref{lem:mixing-mon},
we first prove two simple lemmas regarding an equivalent condition
for mixing of combinatorial maps.

\begin{lemma}
\label{lem:mixing-all}
Let $\cF \colon \cU \multimap \cU$ be a combinatorial map,
and let $k > 0$. If $\cF^k (U) = \cU$ for all $U \in \cU$
then also $\cF^n (U) = \cU$ for all $U \in \cU$ and for all $n > k$.
\end{lemma}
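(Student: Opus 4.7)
The plan is to prove the statement by induction on $n \geq k$, using the assumption $\cF^k(U) = \cU$ as the base case. The only real content is the inductive step, which reduces to a simple surjectivity observation about $\cF$.

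First I would establish the following surjectivity fact: under the hypothesis, for every $W \in \cU$ there exists $V \in \cU$ with $W \in \cF(V)$. Indeed, pick any $U_0 \in \cU$. Since $\cF^k(U_0) = \cU$, we have $W \in \cF^k(U_0)$, which by the definition of $\cF^k$ as the $k$-fold composition means there is a chain $U_0, U_1, \ldots, U_{k-1}, U_k = W$ with $U_i \in \cF(U_{i-1})$ for each $i$. In particular $W \in \cF(U_{k-1})$, so $V = U_{k-1}$ works. (Here we use $k \geq 1$.)

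Next I would perform the induction. The base case $n = k$ is the hypothesis. For the inductive step, suppose $\cF^n(U) = \cU$ for some $n \geq k$ and every $U \in \cU$. Using the notation $\widehat{\cF}(A) = \bigcup_{V \in A} \cF(V)$ from the earlier remark, we compute
\[
\cF^{n+1}(U) \;=\; \widehat{\cF}\bigl(\cF^n(U)\bigr) \;=\; \widehat{\cF}(\cU) \;=\; \bigcup_{V \in \cU} \cF(V).
\]
Each $\cF(V)$ is a subset of $\cU$, so the union is contained in $\cU$; conversely, by the surjectivity fact above, every $W \in \cU$ belongs to $\cF(V)$ for some $V \in \cU$, so the union equals $\cU$. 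Hence $\cF^{n+1}(U) = \cU$, completing the induction.

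There is no real obstacle here; the only thing to be a little careful about is extracting the surjectivity of $\cF$ from the hypothesis (which genuinely needs $k \geq 1$ and cannot be replaced by ``$\cF^0(U) = \cU$''). Once that is in hand, the induction is purely formal.
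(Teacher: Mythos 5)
Your proof is correct, and it uses the same overall strategy as the paper (induction from $n=k$), but the inductive step peels $\cF$ off the \emph{outside}: you write $\cF^{n+1}(U)=\widehat{\cF}(\cF^n(U))=\widehat{\cF}(\cU)$ and then must show $\widehat{\cF}(\cU)=\cU$, which requires establishing that $\cF$ is surjective onto $\cU$ --- a fact you correctly extract from the hypothesis via a chain of length $k\geq 1$. The paper instead peels $\cF$ off the \emph{inside}, writing $\cF^{k+1}(U)=\bigcup_{V\in\cF(U)}\cF^{k}(V)$; with that decomposition the only auxiliary fact needed is the much weaker statement $\cF(U)\neq\emptyset$ (each term of the nonempty union is already all of $\cU$), and the induction closes immediately. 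Both arguments are valid, and your surjectivity observation is a correct and slightly stronger consequence of the hypothesis than what the paper invokes, but the paper's choice of composition order makes the step cleaner because it sidesteps the need for surjectivity entirely. Your remark that $k\geq 1$ is genuinely needed for the surjectivity extraction is a nice sanity check, though in the paper's version the corresponding point is simply that $\cF(U)\neq\emptyset$ (which also follows from $\cF^k(U)=\cU$ with $k\geq 1$, since $\cU\neq\emptyset$).
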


\begin{proof}
This follows by induction from the fact that
\( \cF^{k+1}(U) = \bigcup_{V\in\cF(U)}\cF^{k}(V) \)
and that $\cF(U) \neq \emptyset$ and $\cF^{k}(V) = \cU$.
%
\end{proof}

\begin{lemma}
\label{lem:mixing-equiv}
Let $\cF \colon \cU \multimap \cU$ be a combinatorial map.
Then $\cF$ is mixing if and only if
there exists $k > 0$ such that $\cF^k (U) = \cU$ for all $U \in \cU$.
\end{lemma}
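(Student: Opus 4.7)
The plan is to prove both directions directly from the definitions, using three ingredients: the symmetry $V \in \cF^{-n}(U) \iff U \in \cF^n(V)$ noted in the discussion preceding the lemmas, the finiteness of $\cU$, and Lemma~\ref{lem:mixing-all}.

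For the ``if'' direction, I would assume $\cF^k(U) = \cU$ for every $U \in \cU$. Lemma~\ref{lem:mixing-all} upgrades this to $\cF^n(U) = \cU$ for every $U \in \cU$ and every $n \geq k$. Then for any pair $U, V \in \cU$ and every $n \geq k$ we have $V \in \cF^n(U)$, equivalently $V \in \cF^{-n}(U)$ by the symmetry above, so the mixing condition holds with threshold $N := k$, which is in fact uniform in the pair.

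For the ``only if'' direction, I would extract from mixing, for each ordered pair $(U,V) \in \cU \times \cU$, a threshold $N(U,V) > 0$ such that $V \in \cF^n(U)$ for every $n > N(U,V)$ (again after switching the roles of $U$ and $V$ via the symmetry). Since $\cU$ is finite, so is $\cU \times \cU$, and therefore $k := 1 + \max_{(U,V) \in \cU \times \cU} N(U,V)$ is a well-defined positive integer. For this $k$, every $V \in \cU$ lies in $\cF^k(U)$, so $\cF^k(U) \supseteq \cU$; the reverse inclusion is automatic since images of $\cF^k$ are subsets of $\cU$ by definition. There is no genuine obstacle here: the whole content is that Lemma~\ref{lem:mixing-all} lets one bypass pair-dependent thresholds in the ``if'' direction, while finiteness of $\cU$ lets one combine the pair-dependent thresholds $N(U,V)$ supplied by mixing into a single uniform $k$ in the ``only if'' direction.
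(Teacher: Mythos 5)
Your proof is correct and follows essentially the same approach as the paper: Lemma~\ref{lem:mixing-all} handles the ``if'' direction, and finiteness of $\cU$ supplies a uniform threshold in the ``only if'' direction. The one small refinement is your choice $k := 1 + \max N(U,V)$, which correctly respects the strict inequality $n > N$ in the definition of mixing, whereas the paper takes $k := \max N_{U,V}$ and implicitly glosses over this off-by-one point.
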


\begin{proof}
Let us first assume that $\cF$ is mixing.
Then for every $U, V \in \cU$ there exists $N_{U, V}$
such that $V \in \cF^n (U)$ for all $n > N_{U, V}$.
Since $\cU$ is finite,
the number $k := \max \{N_{U, V} : U, V \in \cU\}$
is well defined and finite.
Obviously, for this $k$,
the set $\cF^k (U)$ contains every $V \in \cU$,
for all $U \in \cU$.

Now suppose that $\cF^k (U) = \cU$ for all $U \in \cU$
and some $k > 0$.
Lemma~\ref{lem:mixing-all}
implies that $\cF^n (U) = \cU$ for all $U \in \cU$
and all $n \geq k$,
which immediately implies that $\cF$ is mixing.
\end{proof}

\begin{proof}[Proof of Lemma~\ref{lem:mixing-mon}]
Let $\cU_1$ and $\cU_2$ be two covers of $X$,
with $\cU_1$ finer than $\cU_2$.
Let $\cF_1 \colon \cU_1 \multimap \cU_1$
and $\cF_2 \colon \cU_2 \multimap \cU_2$ be combinatorial maps.
Assume that $\cF_1$ is finer than $\cF_2$ and that $\cF_1$ is mixing.
We shall show that under these assumptions also $\cF_2$ is mixing.
Take $k > 0$ such that $\cF_1^k (U) = \cU_1$ for all $U \in \cU_1$,
given by Lemma~\ref{lem:mixing-equiv}.
Take any $W \in \cU_2$ and consider $\cF_2^k (W)$.
Since $\cU_1$ is finer than~$\cU_2$,
there exists some $U \in \cU_1$ such that $U \subset W$.
Since $\cF_1$ is finer than~$\cF_2$,
it follows that $|\cF_1^k (U)| \subset |\cF_2^k (W)|$.
The assumption that $\cU_2$ is essential
implies that $\cF_2^k (W) = \cU_2$.
Since the choice of $W \in \cU_2$ was arbitrary,
it follows from Lemma~\ref{lem:mixing-equiv}
that $\cF_2$ is mixing.
\end{proof}


\section{Graph algorithms}
\label{sec:algor}

In this section we provide explicit algorithms for the verification
of combinatorial transitivity and mixing.
In particular, we give a constructive proof of (A).
The first step is to translate the properties
defined in Section~\ref{sec:transmix}
into the language of graphs associated with combinatorial maps.

We recall that a \emph{finite directed graph},
further called \emph{graph} for short,
is a pair $G = (\cV, \cE)$, where $\cV$ is a finite set
whose elements are called \emph{vertices},
and $\cE \subset \cV \times \cV$
is a set of selected (ordered) pairs of vertices.
The elements of $\cE$ are called \emph{edges}.
Combinatorial maps are very naturally encoded as graphs.

\begin{definition}
We say that $G = (\cV, \cE)$ is the graph \emph{associated} with
a combinatorial map $\cF$ on a cover $\cU$ of $X$ if $\cV = \cU$
and $\cE = \{(U,V) \in \cU \times \cU : V \in \cF (U)\}$.
\end{definition}

In this section we explain how the verification of the properties
of combinatorial transitivity and mixing can be reduced
to the verification of certain properties of the associated graphs.
We then describe in some detail specific algorithms that verify
these properties, and we upper provide bounds for their effectiveness.
Before we show such algorithms,
let us explain how their effectiveness is measured.
A running time estimate of an algorithm
is typically given by means of the order
of the number of required primitive operations
as a function of the size of the input,
here given by $|\cV|$, the size of the vertex set, and $|\cE|$,
the size of the edge set.
We use the notation
$O\big(\Psi(|\cV|,|\cE|)\big)$
to indicate that there exist constants $c, n_0 > 0$ such that
for any graph $G = (\cV, \cE)$ for which $|\cV|, |\cE| \geq n_0$,
the number of operations $\Phi (G)$ of the algorithm
applied to the graph $G$ satisfies the inequality
$\Phi(G) \leq c\Psi(|\cV|,|\cE|)$.
This gives an asymptotic upper bound for the worst case running time.
For a more detailed explanation of this notation and of running time
in general, the reader is referred to \cite[\S 3.1]{CorLeiRivSte01}.
In what follows, we say that an algorithm runs in \emph{linear} time
if its worst case running time is $O \big(|\cV| + |\cE|\big)$.

\begin{proposition}
\label{prop:alg}
There exists an algorithm which verifies in linear time
whether a combinatorial map is transitive or not.
There exists an algorithm which verifies in linear time
whether a combinatorial map is mixing or not.
\end{proposition}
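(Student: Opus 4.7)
The plan is to translate each combinatorial property into a classical graph-theoretic property of the associated graph $G = (\cV, \cE)$, and then invoke (or sketch) a known linear-time graph algorithm.

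For \textbf{transitivity}, observe that $V \in \cF^{-n}(U)$ is equivalent to the existence of a directed walk of length $n$ from $U$ to $V$ in $G$. Hence $\cF$ is combinatorially transitive if and only if $G$ is \emph{strongly connected}, i.e.\ every vertex is reachable from every other vertex by a directed path. Strong connectivity can be verified in time $O(|\cV|+|\cE|)$, for example by computing the strongly connected components via Tarjan's algorithm (a single depth-first search with a stack bookkeeping lowlink values), or via Kosaraju's two-pass depth-first search on $G$ and on the transpose graph, and then checking that there is only one component. I would simply cite one of these standard references (e.g.\ \cite{CorLeiRivSte01}) rather than redevelop the algorithm from scratch.

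For \textbf{mixing}, by Lemma~\ref{lem:mixing-equiv} the property is equivalent to the existence of some $k > 0$ such that $\cF^k(U) = \cU$ for every $U \in \cU$. In graph terms this is exactly the condition that $G$ be \emph{primitive}, which is classically equivalent to being strongly connected and \emph{aperiodic}, the period of a strongly connected graph being the greatest common divisor of the lengths of all its directed cycles. Once strong connectivity has been verified (in linear time, as above), the period can itself be computed in linear time by the following standard trick: fix any root $r \in \cV$ and run a breadth-first (or depth-first) search to obtain levels $d(v)$ for every $v \in \cV$; then the period equals
\[
p = \GCD \bigl\{\, d(u) + 1 - d(v) \,:\, (u,v) \in \cE \,\bigr\},
\]
where the GCD is taken over integers (tree edges contribute $0$ and are absorbed). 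Each edge is inspected once and each GCD step is constant-time (or logarithmic in the labels, which are bounded by $|\cV|$), so the total cost is $O(|\cV|+|\cE|)$. Then $\cF$ is mixing if and only if $G$ is strongly connected and $p = 1$.

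The main obstacle, and the step deserving care, is the correctness of the period computation: one must verify that the defects $d(u)+1-d(v)$ associated with all edges generate the same subgroup of $\bZ$ as the set of lengths of all closed walks through $r$. This is a standard fact, whose proof uses strong connectivity to turn any closed walk at $r$ into a sum of elementary defects along its edges; I would include a brief argument of this flavour and then cite the literature for the remaining details. Putting the two algorithms together yields Proposition~\ref{prop:alg}. The explicit pseudocode and data-structure choices (adjacency lists for $\cF$ and its reverse, standard BFS/DFS queues) can then be presented in the remainder of the section.
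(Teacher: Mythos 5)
Your proposal is correct and follows essentially the same route as the paper: reduce transitivity to strong connectedness (verified by Tarjan's algorithm) and mixing to strong connectedness plus aperiodicity (verified by a linear-time GCD computation over edge ``defects'' $d_v - d_u - 1$ in a DFS/BFS tree), exactly as in Lemma~\ref{lem:graph}, Algorithm~\ref{alg:period}, and Lemmas~\ref{lem:conn} and~\ref{lem:period}. The only differences are presentational: you allow BFS as well as DFS for the level computation and defer the correctness proof of the period formula to the literature, whereas the paper commits to DFS and supplies a self-contained proof.
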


Notice that the size \( |\cV| \) of the vertex set is exactly the
number of elements of the cover \( \cU \). Moreover,
in many situations, such as if the map \( f \) is Lipschitz
and the elements of $\cU$ are of similar size, the expected size
\( \cF(U) \) of the image of each element \( U \),
is uniformly bounded, independently of the resolution of the cover.
Therefore, in practice, the size \( |\cE| \) of the edge set
is bounded by a constant multiple of the size \( |\cV| \)
of the vertex set, which is exactly
the number of elements in the cover.
Thus the algorithms we describe, in many situations,
actually run in linear time in the number of the elements
of the cover \( \cU \).
We consider this linear time property
as an important characteristic of our method
since it implies that as computing memory
and power increase, it will be realistically possible
to obtain significant improvements to the scale
at which mixing is verified in systems of interest,
such as the H\'enon map which we consider in this paper
(see, however, additional remarks on this point
in Section~\ref{sec:cost}).

The remaining part of this section is devoted
to the proof of Proposition \ref{prop:alg}.
In particular we shall obtain completely explicit forms
of the required algorithms.
In Section~\ref{sec:graphs} we show that the problem
can be formulated in terms of the verification
of certain properties of directed graphs,
namely strong connectedness and aperiodicity.
The algorithms for verifying these properties are not new.
An algorithm that computes strongly connected components
of a directed graph in linear time
belongs to the canons of graph algorithms
(see \cite[\S 22.5]{CorLeiRivSte01}),
and we use it below to determine whether
a given graph is strongly connected.
An algorithm for proving aperiodicity is given
in \cite{AperWiki} but without a formal proof
of its correctness, which we shall give here.
For completeness, and for the benefit of readers
who are not familiar with graph algorithms
or algorithms in general, we shall give full descriptions
of these algorithms below. They actually require
some non-trivial constructions which we believe
are of independent, albeit relatively technical, interest.
In particular, we provide a fully self-contained
proof of Proposition \ref{prop:alg}.

In Section \ref{sec:repgraphs} we discuss the representation
of graphs in computer's memory.
In Section \ref{sec:dfs} we describe a basic way
of ``exploring'' graphs algorithmically
and of describing certain structures of graphs.
This approach, called ``depth first search'',
underpins both the method
for computing strongly connected components
described in Section \ref{sec:scc},
and the method for verifying aperiodicity
described in Section \ref{sec:aper}.

\subsection{Strongly connected and aperiodic graphs}
\label{sec:graphs}

We now show how the notions of combinatorial transitivity
and mixing can be reduced to certain properties
of the associated graphs.
A \emph{path} in a graph $G = (\cV, \cE)$
is a sequence $(U_i)_{i = 0}^n$
such that $(U_i, U_{i + 1}) \in \cE$
for all $i = 0, \ldots, n - 1$.

\begin{definition}
We say that a graph $G = (\cV, \cE)$ is \emph{strongly connected}
if for every $U, V \in \cV$ there exists a path $(v_0, \ldots, v_k)$
in~$G$ such that $v_0 = U$ and $v_k = V$.
\end{definition}

The \emph{length} of a path $(U_i)_{i = 0}^n$ is $n$.
A \emph{cycle} in $G$ is a path
in which $U_0 = U_n$.
The greatest common divisor of the lengths of all the cycles
in a graph $G = (\cV, \cE)$ is called the \emph{period} of $G$.

\begin{definition}
We say that a graph $G = (\cV, \cE)$ is \emph{aperiodic}
if its period equals $1$.
\end{definition}

\begin{lemma}
\label{lem:graph}
A combinatorial map is transitive if and only if
the associated graph is strongly connected.
A combinatorial map is mixing if and only if
the associated graph is strongly connected and aperiodic.
\end{lemma}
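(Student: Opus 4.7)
My plan is to set up a dictionary between iterated images under $\cF$ and paths in $G$, and then use it to translate the combinatorial definitions of transitivity and mixing into graph-theoretic statements. The elementary fact underlying everything, which I would prove first by induction on $n$, is that a path $(U_0, \ldots, U_n)$ of length $n \geq 1$ exists in $G$ if and only if $U_n \in \cF^n(U_0)$. Granting this, the condition ``$V \in \cF^{-n}(U)$ for some $n>0$'' is literally the existence of a path of positive length from $V$ to $U$ in $G$, so combinatorial transitivity unfolds directly into the definition of strong connectedness. The first equivalence of the lemma is then immediate.

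For the mixing equivalence I would handle the two implications separately. The forward direction is short: Lemma~\ref{lem:mixing-equiv} supplies some $k > 0$ with $\cF^k(U) = \cU$ for every $U \in \cU$, and Lemma~\ref{lem:mixing-all} then also yields $\cF^{k+1}(U) = \cU$. Hence every vertex $U$ lies on closed walks of both lengths $k$ and $k+1$, and since $\gcd(k,k+1) = 1$, the period of $G$ divides $1$, giving aperiodicity; strong connectedness is inherited from transitivity, which mixing trivially implies.

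The converse is the substantive part. Assuming $G$ is strongly connected and aperiodic, I would fix a vertex $U$ and consider the set $S(U) \subset \bN$ of lengths of closed walks through $U$. This set is closed under addition by concatenation, and a standard rerouting argument (splicing an arbitrary cycle into a loop based at $U$ via paths guaranteed by strong connectedness) shows that the GCD of $S(U)$ coincides with the period of $G$, and hence equals $1$. The Sylvester--Frobenius (``Chicken McNugget'') theorem then produces $M_U$ with $S(U) \supseteq [M_U, \infty) \cap \bN$. For any target $V$, I pick a path from $U$ to $V$ of length $p(U,V)$, and for $n \geq M_U + p(U,V)$ concatenate it with a closed walk at $U$ of length $n - p(U,V)$; this exhibits walks of every sufficiently large length from $U$ to $V$. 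Taking the maximum of $M_U + p(U,V)$ over the finite set $\cU \times \cU$ gives a uniform $N$ with $\cF^n(U) = \cU$ for every $U$ and every $n \geq N$, and Lemma~\ref{lem:mixing-equiv} delivers mixing. The main obstacle is precisely this converse: promoting the GCD-type aperiodicity condition, together with strong connectedness, into a \emph{uniform} time $N$ that works for every ordered pair of vertices. This is where the number-theoretic Sylvester--Frobenius step enters essentially, and where one must be careful that the period measured at a single vertex really coincides with the graph period.
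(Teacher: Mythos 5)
Your transitivity equivalence and the forward direction of the mixing equivalence follow the paper's route exactly: the dictionary between $V\in\cF^n(U)$ and length-$n$ paths, plus the extraction of cycles of coprime lengths $k$ and $k+1$ from Lemmas~\ref{lem:mixing-equiv} and~\ref{lem:mixing-all}. Your converse direction, however, is a genuinely different argument. The paper fixes one closed walk $T$ of length $t$ through all vertices, picks cycles $C_1,\ldots,C_r$ and $D_1,\ldots,D_s$ with lengths summing to $p$ and $q=p-1$, splices $k$ copies of the $C_i$ and $l=t-k$ copies of the $D_j$ into $T$ to realize closed-walk lengths $tp+k$ for $k=0,\ldots,t-1$, and then extends along the tail of $T$ to conclude $\cF^{t(p+1)}(U)=\cU$ for every $U$ --- an explicit, fully elementary bound. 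Your version is the classical Perron--Frobenius / Markov-chain argument: localize the period at a single vertex $U$ via a rerouting argument, observe that the lengths of closed walks through $U$ form an additively closed set with $\gcd$ equal to the graph period (hence $1$), invoke Sylvester--Frobenius to saturate all sufficiently large lengths, then prepend connecting paths and take a maximum over the finite set $\cU\times\cU$. Both are correct. The paper's construction buys an explicit $N$ without citing any number theory; yours is more modular, makes the role of each hypothesis transparent, and correctly isolates and handles the subtle point --- that the per-vertex GCD equals the graph period --- which the paper sidesteps by threading everything through the single sweeping cycle $T$ (the per-vertex question never arises there, but the splicing of the $C_i$ and $D_j$ into $T$ at appropriate vertices is, correspondingly, left implicit).
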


\begin{proof}
Since the existence of a path of length $n$
from $U$ to $V$ in the graph
associated with a combinatorial map $\cF$
is equivalent to $V \in \cF^n (U)$,
the equivalence in the case of transitivity
follows immediately from the definitions.

The statement on mixing can be derived, with certain effort,
from \cite{JabKul03}, but for the sake of completeness
we provide a detailed proof. Assume that
a combinatorial map $\cF \colon \cU \multimap \cU$ is mixing.
Then Lemma~\ref{lem:mixing-equiv} implies that there exists $k > 0$
such that $\cF^k (U) = \cU$ for all $U \in \cU$.
It is thus obvious that $G$ is strongly connected.
Lemma~\ref{lem:mixing-all} implies that also
$\cF^{k + 1} (U) = \cU$ for all $U \in \cU$.
In particular, $U \in \cF^k (U)$ and $U \in \cF^{k + 1} (U)$.
In terms of the graph $G$, this implies that there exist
cycles of the co-prime lengths $k$ and $k + 1$
in $G$ through $U$, and thus $G$ is aperiodic.

Let us now focus on the opposite implication.
Let $T = (U_{j_0}, \ldots, U_{j_t})$ for some $t > 0$
be a cycle in $G$ that runs through all the vertices of $G$
(its existence follows from the strong connectedness of $G$).
Since the $\GCD$ of the lengths of all the cycles in $G$ is $1$,
there exist cycles $C_1, \ldots, C_r$ and $D_1, \ldots, D_s$
of lengths $p_1, \ldots, p_r$ and $q_1, \ldots, q_s$, respectively,
such that $p_1 + \cdots + p_r - q_1 - \cdots - q_s = 1$.
Consider the cycles $T_{kl}$ in $G$ composed of $T$,
$k$ copies of $C_1, \ldots, C_r$,
and $l$ copies of $D_1, \ldots, D_s$.
The length of each such cycle is $t_{kl} = t + kp + lq$,
where $p := p_1 + \cdots + p_r$ and $q := q_1 + \cdots + q_s$.
Since $p = q + 1$, we have $t_{kl} = t + (k + l) q + k$.
If we fix any vertex $U$ of $G$ at the cycle $T$
and consider $k = 0, \ldots, t - 1$ with $l = t - k$,
then we obtain paths from $U$ to $U$ whose lengths
are $t + tq + k = tp + k$.
We can complement these cycles by following $T$
to paths of length $tp + t = t (p + 1)$
that end at each subsequent vertex of $T$.
Since $T$ runs through all the vertices of $G$,
it follows that $\cF^{t (p + 1)} (U) = \cU$.
Since $U \in \cU$ was chosen arbitrarily,
it follows from Lemma~\ref{lem:mixing-equiv} that $\cF$ is mixing.
\end{proof}

\subsection{Representation of graphs}
\label{sec:repgraphs}

The first step in developing algorithms
for studying properties of a graph is to represent
such a graph in computer's memory.
One standard way to do this is as a collection of
\emph{adjacency lists}.
We say that a vertex $v$ is adjacent
to another vertex $u$ if the edge $(u,v)$ is in the graph.
The adjacency-list representation of a graph $G = (\cV, \cE)$
consists of an array of \( |\cV| \) lists,
one for each vertex in \( \cV \).
For each \( u \in\cV \),
the adjacency list of \( u \)
contains all the vertices \( v \in \cV \)
such that \( (u, v)\in \cE \) in certain order.
Searching through this list
or taking one adjacent vertex after another
can be done in the time proportional to the number
of vertices in the list.

Other representations,
such as an ``adjacency matrix'' representation, can also be used
and may be more or less convenient depending on the structure
of the graph under investigation. As a general rule,
the adjacency-list representation is preferable
for \emph{sparse} graphs, that is graphs for which \( |\cE| \)
is of a lower order than its maximum possible value
of \( |\cV|^{2} \), e.g., if it is proportional to $|\cV|$.
This is usually the case for combinatorial maps
and thus we shall use this representation of graphs here.

\subsection{The Depth First Search algorithm}
\label{sec:dfs}

Depth First Search (DFS) is an algorithm for scanning a graph
$G = (\cV, \cE)$. It starts at an arbitrarily chosen vertex,
and follows edges and paths to visit other vertices in the graph.
If no more vertices can be reached in this way
then an arbitrary unvisited vertex is chosen
and the algorithm continues running from that vertex.
Each vertex $u \in \cV$ is assigned
a unique \emph{discovery time} $t_u \in \bN$
which reflects the order in which the vertices are visited,
or discovered.
The general strategy of the algorithm
while visiting a vertex is to take the first
edge leading to an unvisited vertex and to follow it.
If no more vertices can be discovered from a given vertex,
then the algorithm traces back and checks the other edges
leading from the previous vertex.
The name ``depth first search'' comes from the fact
that the search goes as deep into the graph as possible
using the first edge leading to an unvisited vertex,
and the other edges emanating from each vertex
are only checked afterwards.
This is different from another well known strategy,
called ``breadth first search'',
in which all the edges leading from each vertex are checked
before the search continues from the discovered vertices.
The details of the DFS procedure are summarized in the following

\begin{algorithm}
\label{alg:dfs1}
\begin{tabbing}
\hspace{1.5cm}\=\hspace{1cm}\=\hspace{1cm}\=\hspace{1cm}\=\\
\kw{function} {\tt DFS} \\
\kw{input:}
\> $G = (\cV, \cE)$ --- directed graph \\
\kw{code:}
\> {\tt time} \( := 0 \)\\
\> \kw{for each} \( u \in \cV \) \kw{do} \\
\> \> \kw{if not defined} $t_u$ \kw{then} \\
\> \> \> {\tt DFS-Visit} ($u$); \\[6pt]
\kw{function} {\tt DFS-Visit} ($u$)\\
\> {\tt time} $:=$ $\text{\tt time} + 1$; \\
\> $t_u := $ {\tt time}; \\
\> \kw{for each} $v \in \cV$ such that $(u, v) \in \cE$ \kw{do} \\
\> \> \kw{if not defined} $t_v$ \kw{then} \\
\> \> \> {\tt DFS-Visit} ($v$);
\end{tabbing}
\end{algorithm}

This algorithm runs in linear time $O \big(|\cV| + |\cE|\big)$,
because the function {\tt DFS-Visit} is called exactly once
for each vertex in the graph, and all the edges
emanating from each vertex are also checked exactly once.
Note that in order to achieve the linear time in practice,
it is necessary to represent the graph
in such a way that scanning through the set of all the edges
emanating from a given vertex can be done
in the time proportional to the number of these edges,
like in the case of the adjacency-list representation of a graph.
The reader is referred to \cite[\S22.3]{CorLeiRivSte01}
for more details on the DFS algorithm.

\subsection{Strongly connected components}
\label{sec:scc}

One of many applications of the depth first search algorithm
is a method for finding the strongly connected components
of a graph.

The most standard approach (see~\cite[\S22.5]{CorLeiRivSte01})
is to run the DFS algorithm on the graph $G$,
and then to compute depth first trees of $G^T$
(the transpose of $G$, obtained from $G$ by inverting
the direction of all the edges), which turn out
to form the strongly connected components of $G$.
Although this algorithm runs in linear time and memory,
it is inconvenient if applied to huge graphs
because of the need to compute $G^T$,
which takes up as much memory as $G$ itself,
and thus effectively doubles memory usage.

Because of this reason, we use Tarjan's algorithm instead
for computing strongly connected components
of a graph (see~\cite{Tar72, TarWiki}). This algorithm is based
directly upon DFS. In addition to computing the discovery time $t_u$
for each visited vertex, it also computes the lowest discovery time
$l_u$ of all the vertices reachable from that vertex.
This number is computed during the DFS itself,
so that it is always known when needed.
Moreover, during the DFS run, the visited vertices are put
on a stack $S$, and each strongly connected component $C_k$ is taken
from the stack whenever a visited vertex is determined
to ``close'' such a component. For the sake of completeness,
we provide a pseudocode of this algorithm below,
but we refer to~\cite{Tar72} for the proof of its correctness
and for more explanations.

\begin{algorithm}
\label{alg:scc}
\begin{tabbing}
\hspace{1.5cm}\=\hspace{1cm}\=\hspace{1cm}\=\hspace{1cm}\=\\
\kw{function} {\tt SCC} \\
\kw{input:}
\> $G = (\cV, \cE)$ --- directed graph; \\
\kw{code:}
\> {\tt time} := 0; $k := 0$; $S$ := empty stack; \\
\> \kw{for each} $u \in \cV$ \kw{do} \\
\> \> \kw{if not defined} $t_u$ \kw{then} \\
\> \> \> {\tt tarjan} ($u$); \\
\> \kw{return} $(C_1, \ldots, C_k)$\\[6pt]
\kw{function} {\tt tarjan} ($u$) \\
\> {\tt time} := {\tt time} + $1$; $t_u$ := $l_u$ := {\tt time}; \\
\> put $u$ on the top of the stack $S$; \\
\> \kw{for each} $v \in \cV$ \kw{such that} $(u,v) \in \cE$ \kw{do} \\
\> \> \kw{if not defined} $t_v$ \kw{then} \\
\> \> \> {\tt tarjan} ($v$); \\
\> \> \> $l_u := \min (l_u, l_v)$; \\
\> \> \kw{else if} $v$ is in $S$ \kw{then} \\
\> \> \> $l_u := \min (l_u, t_v)$; \\
\> \kw{if} $t_u = l_u$ \kw{then} \\
\> \> $k := k + 1$; \\
\> \> remove the elements from the top of $S$ until $u$ has been \\
\> \> \> removed, too, and put all of the removed elements into $C_k$;
\end{tabbing}
\end{algorithm}

With this algorithm, we now determine whether a given graph
is strongly connected in the most obvious way possible.
Namely, we compute the strongly connected components
and check if there is only one such component,
as made precise below.

\begin{algorithm}
\label{alg:conn}
\begin{tabbing}
\hspace{1.5cm}\=\hspace{1cm}\=\hspace{1cm}\=\hspace{1cm}\=\\
\kw{function} {\tt StronglyConnected} \\
\kw{input:}
\> $G = (\cV, \cE)$ --- directed graph; \\
\kw{code:}
\> $(C_1, \ldots, C_k)$ := {\tt SCC} ($G$); \\
\> \kw{if} $k = 1$ \kw{and} $\card C_1 = \card \cV$ \kw{then} \\
\> \> \kw{return} {\tt true}; \\
\> \kw{else} \\
\> \> \kw{return} {\tt false}.
\end{tabbing}
\end{algorithm}

The following features of the above algorithm are obvious:

\begin{lemma}
\label{lem:conn}
Algorithm \ref{alg:conn} applied to a directed graph $G = (\cV, \cE)$
returns {\tt true} if and only if $G$ is strongly connected.
This algorithm runs in linear time.
\end{lemma}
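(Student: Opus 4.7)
The plan is to derive both claims directly from the correctness and complexity of Tarjan's algorithm (Algorithm~\ref{alg:scc}), which is invoked as the single nontrivial step of Algorithm~\ref{alg:conn}. Since the paper explicitly defers the proof of correctness of Tarjan's algorithm to~\cite{Tar72}, I would treat as given the fact that {\tt SCC} applied to $G$ returns a tuple $(C_1, \ldots, C_k)$ whose entries form a partition of $\cV$ into the strongly connected components of $G$, and that it does so in time $O(|\cV| + |\cE|)$.

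For the correctness part, I would argue as follows. By the definition of strong connectedness, $G$ is strongly connected if and only if for every pair of vertices $U, V \in \cV$ there exists a directed path from $U$ to $V$; equivalently, $\cV$ itself is a single strongly connected component. Since $(C_1, \ldots, C_k)$ partitions $\cV$, the condition that $\cV$ is a single strongly connected component is equivalent to $k = 1$ together with $\card C_1 = \card \cV$ (the latter equality is automatic once $k=1$, but including it in the algorithm guards against pathologies such as an empty vertex set). Thus Algorithm~\ref{alg:conn} returns {\tt true} precisely when $G$ is strongly connected.

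For the linear-time part, I would note that the call to {\tt SCC}$(G)$ takes time $O(|\cV| + |\cE|)$, while the subsequent comparison of $k$ with $1$ and of $\card C_1$ with $\card \cV$ requires only constant time (assuming the counts $k$ and $\card \cV$ are maintained as integers during the computation). Hence the total running time is $O(|\cV| + |\cE|)$, which is linear in the sense defined at the end of Section~\ref{sec:algor}.

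There is no real obstacle here: the statement is a direct packaging of the correctness and complexity of Tarjan's SCC routine into the test ``$G$ has exactly one strongly connected component''. The only point requiring care in the write-up is to make explicit the equivalence between ``$G$ is strongly connected'' and ``$\cV$ is a single strongly connected component'', and to cite~\cite{Tar72} (or~\cite{CorLeiRivSte01}) for the properties of {\tt SCC} that are being used.
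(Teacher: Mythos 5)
Your proof is correct and is essentially just a careful unpacking of what the paper declares ``obvious'' (the paper gives no formal proof of Lemma~\ref{lem:conn}, merely prefacing it with ``The following features of the above algorithm are obvious''); both you and the paper rest the argument on the correctness and linear-time bound for Tarjan's SCC routine deferred to~\cite{Tar72}, plus the trivial observation that the wrapper check is constant time.
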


\subsection{Aperiodic graphs}
\label{sec:aper}

Efficient computation of the greatest common denominator of cycles
in a strongly connected directed graph is less standard,
and the solution suggested in \cite{JabKul03}
yields cubic time. However, there exists an algorithm
which runs in linear time, see~\cite{AperWiki}.
For the sake of completeness,
we describe this algorithm below
and prove its correctness
(note that the latter is not provided in~\cite{AperWiki}).
This algorithm is based upon the standard
DFS (Depth First Search) algorithm
that scans the entire graph starting at an arbitrarily
chosen vertex (recall the assumption that the graph
is strongly connected),
and computes the greatest common divisor
of certain numbers related to the edges of the graph.

\begin{algorithm}
\label{alg:period}
\begin{tabbing}
\hspace{1.5cm}\=\hspace{1cm}\=\hspace{1cm}\=\hspace{1cm}\=\\
\kw{function} {\tt GraphPeriod} \\
\kw{input:}
\> $G = (\cV, \cE)$ --- a strongly connected directed graph; \\
\> $u$ --- an element of $\cV$; \\
\> $d_u \in \bZ$ --- the depth of $u$ in the DFS tree
being constructed; \\
\> $p \in \bZ$ --- the GCD of cycle periods found so far; \\[2pt]
\kw{code:}
\> \kw{for each} $v \in \cV$ \kw{such that} $(u, v) \in \cE$ \kw{do} \\
\> \> \kw{if} \kw{defined} $d_v$ \kw{then} \\
\> \> \> $p$ := GCD ($p$, $d_v - d_u - 1$); \\
\> \> \kw{else} \\
\> \> \> {\tt GraphPeriod}
($G$, $v$, $d_v := d_u + 1$, $p$); \\
\> \kw{return} $p$.
\end{tabbing}
\end{algorithm}

\begin{lemma}
\label{lem:period}
Algorithm \ref{alg:period} applied to a strongly connected
directed graph $G = (\cV, \cE)$, any element $r \in \cV$,
and the numbers $d_r := 0$ and $p := 0$,
returns the greatest common divisor
of the lengths of all the cycles in $G$.
This algorithm runs in linear time.
\end{lemma}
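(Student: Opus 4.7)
The plan is to observe that Algorithm~\ref{alg:period}, invoked with $d_r := 0$ and $p := 0$, is a standard depth-first search from $r$ decorated with one arithmetic update per examined edge. Because $G$ is strongly connected, $r$ reaches every vertex, so every vertex eventually receives a depth $d_u$ and every edge is examined exactly once. I shall distinguish \emph{tree edges} $(u,v)$, along which a previously undiscovered vertex is reached (so that $d_v = d_u + 1$), from \emph{non-tree edges}, which are precisely those that trigger a $\GCD$ update. Writing $g$ for the value returned and $\gamma$ for the greatest common divisor of the lengths of all cycles in $G$, the heart of the lemma is the equality $g = \gamma$.

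For the inclusion $\gamma \mid g$, I would argue that, for each non-tree edge $(u,v)$, the quantity $d_v - d_u - 1$ is a difference of lengths of two closed walks at $r$. Strong connectedness supplies a directed walk $P$ from $v$ back to $r$; concatenating the DFS-tree path $r \to u$ with the edge $(u,v)$ and then with $P$ gives a closed walk of length $d_u + 1 + |P|$, while concatenating the tree path $r \to v$ with $P$ gives a closed walk of length $d_v + |P|$. Since every closed directed walk decomposes into simple directed cycles, $\gamma$ divides the length of each, hence divides their difference, namely $d_v - d_u - 1$.

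For the opposite inclusion $g \mid \gamma$, I would take an arbitrary cycle $(v_0, v_1, \ldots, v_n = v_0)$ and telescope. The sum $\sum_{i=0}^{n-1}(d_{v_{i+1}} - d_{v_i})$ vanishes; tree edges contribute $+1$ each, while non-tree edges contribute $(d_{v_{i+1}} - d_{v_i} - 1) + 1$ each. Rearranging yields
\[
n = -\sum_{(v_i,v_{i+1})\text{ non-tree}} \bigl(d_{v_{i+1}} - d_{v_i} - 1\bigr),
\]
expressing $n$ as a $\bZ$-linear combination of quantities each divisible by $g$; thus $g$ divides every cycle length, and hence $g \mid \gamma$.

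The linear-time claim is routine: the recursion visits each vertex once and inspects each edge once, and the single $\GCD$ update per edge costs constant time in the standard word-RAM model. The hard part is the first inclusion, because in a directed DFS a non-tree edge may be a back, cross, or forward edge and need not by itself close any simple cycle; the device of introducing the auxiliary return walk $P$ provided by strong connectedness, and passing to closed walks rather than simple cycles, is what uniformly handles all three cases without a separate edge-type analysis.
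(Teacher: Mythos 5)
Your proof is correct and follows essentially the same route as the paper's: the same split into $\gamma \mid g$ via the auxiliary return path from $v$ to $r$ supplied by strong connectedness, and $g \mid \gamma$ via telescoping $\sum(d_{v_{i+1}}-d_{v_i})=0$ around an arbitrary cycle. The only cosmetic difference is that the paper defines a ``cycle'' to be any closed walk, so it needn't invoke the decomposition of closed walks into simple cycles as an explicit intermediate step.
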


\begin{proof}
Given a strongly connected graph $G = (\cV, \cE)$,
let $T = (\cV, \cE')$ denote the tree obtained by running
Algorithm~\ref{alg:period} on~$G$,
starting with the vertex $r \in \cV$,
which becomes the root of $T$.
The set $\cE'$ consists of all the edges which incur
recursive calls of the function {\tt GraphPeriod}.
The computed depth of each vertex $u \in \cV$
in $T$ is denoted by $d_u$, with $d_r = 0$.
For each edge $e = (u, v) \in \cE \setminus \cE'$,
define $d_e := d_v - d_u - 1$.
Let $c$ be the GCD (greatest common divisor)
of all these numbers $d_e$.
This is the quantity returned by this algorithm.
We shall prove that $c$ equals the GCD
of the lengths of all the cycles in $G$,
further denoted by $c'$.

Let us first prove that $c' | c$.
Consider $e = (u, v) \in \cE \setminus \cE'$.
Since $G$ is strongly connected,
there exists a path $p_{v, r}$ in $G$ from $v$ to $r$;
denote its length by $d$.
Note that the path $p_{r, u}$ in $T$ from $r$ to $u$
is of length $d_u$, and the path $p_{r, v}$ in $T$
from $r$ to $v$ is of length $d_v$.
Since $p_{r, u}$ combined with $e$ and $p_{v, r}$ is a cycle in $G$
of length $d_u + 1 + d$, and $p_{r, v}$ combined with $p_{v, r}$
is a cycle in $G$ of length $d_v + d$, the common divisor $c'$
of the lengths of all the cycles in $G$ must divide
the difference between these lengths, which is $d_v - d_u - 1$.
Therefore, $c$ is a GCD of numbers divisible by $c'$,
and thus $c' | c$.

Let us now prove that $c | c'$.
Let $(v_0, \ldots, v_n)$ be any cycle in $G$.
We shall prove that $c | n$.
Consider the depths of each $v_i$ in $T$.
The difference in the depth traversed by each edge
$e_i = (v_i, v_{i + 1})$ is $\delta_i := d_{v_{i + 1}} - d_{v_i}$,
which equals $1$ if $e_i \in \cE'$.
Since $v_0$ = $v_n$, obviously $d_{v_0} = d_{v_n}$,
and thus $\sum_{i = 0}^{n - 1} \delta_i = 0$.
In particular, $-n = \sum_{i = 0}^{n - 1} (\delta_i - 1) =
\sum_{i = 0}^{n - 1} (d_{v_{i + 1}} - d_{v_i} - 1)$.
If $e_i \in \cE'$ then the corresponding item in the sum is zero,
otherwise it is divisible by $c$.
Therefore, $c | n$.

The observation that each edge in the graph $G$
is processed exactly once shows that the algorithm
runs in linear time $O (|\cE|)$, which completes the proof.
\end{proof}


\section{Numerical computations}
\label{sec:appl}

We are finally ready to apply all the ideas above
to a specific example. First of all, we state
a more precise version of the theorem
given in the introduction.

\begin{theorem}
\label{thm:henon2}
There exists an open set $X \subset \bR^2$
such that $H_{a,b} (X) \subset X$
and $H_{a,b}|_X$ is (combinatorially) mixing
at all resolutions $> 10^{-5}$
for all $(a, b)$ in an open set $P \subset \bR^2$
containing $(1.4, 0.3)$.
\end{theorem}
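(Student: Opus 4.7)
The plan is to reduce the theorem to the general machinery of Theorem~\ref{thm:allres} and Proposition~\ref{prop:tranmix}. Since combinatorial mixing is a finite resolution property, it is enough to produce a single cover $\cU$ of some suitable $X$ and a single combinatorial representation $\cF_0\colon \cU \multimap \cU$ of $H_{a,b}$ with outer resolution $\cR^+(\cF_0) \le 10^{-5}$, verified to be combinatorially mixing for \emph{every} parameter $(a,b)$ ranging over an open neighborhood $P$ of $(1.4,0.3)$. Theorem~\ref{thm:allres} will then immediately yield mixing at all resolutions $> 10^{-5}$ for each individual $(a,b)\in P$.

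First I would construct a trapping region. Taking $P = (1.4 - \eta, 1.4 + \eta) \times (0.3 - \eta, 0.3 + \eta)$ for some small $\eta > 0$, I would produce a finite union $X$ of open axis-aligned rectangles surrounding the (numerically observed) attractor and check, using interval arithmetic applied to the formula $H_{a,b}(x,y) = (1 + y - ax^2, bx)$ with $(a,b)$ treated as an interval parameter over $P$, that $H_{a,b}(X) \subset X$ uniformly in $(a,b)\in P$. Note that the slight enlargement of $X$ over the attractor and the openness of $P$ are needed so that interval-arithmetic evaluations remain strictly inside $X$ and the resulting representation is simultaneously valid for every parameter in $P$.

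Next, following the construction described in Remark~\ref{rem:covres} and Remark~\ref{rem:comp}, I would produce a cover $\cU$ of $X$ by open rectangles of diameter at most $10^{-5}$, arranged so that adjacent elements overlap enough to guarantee positive inner resolution (this is what distinguishes the construction from a partition and is what enables the application of Theorem~\ref{thm:allres}). For each $U \in \cU$, interval evaluation of $H_{a,b}(U)$ over the parameter box $P$ yields an outer enclosure $B^{H}_U$; defining $\cF_0(U) := \{W \in \cU : W \cap B^H_U \ne \emptyset\}$ produces a combinatorial representation of $H_{a,b}$ valid for every $(a,b)\in P$, with $\cR^+(\cF_0) \le 10^{-5}$ up to a modest Lipschitz-type inflation that can be absorbed by choosing the cover slightly finer if necessary. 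Finally, I would form the associated directed graph $G$, run Algorithm~\ref{alg:conn} (Tarjan's SCC) to verify strong connectedness and Algorithm~\ref{alg:period} to verify that the period equals~$1$; by Lemma~\ref{lem:graph} this is equivalent to combinatorial mixing of $\cF_0$, and then Theorem~\ref{thm:allres} closes the argument.

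The hard part is purely computational. At resolution $10^{-5}$, even a thin neighborhood of the (one-dimensional-looking) Hénon attractor will require a very large number of cover elements — certainly many millions, possibly in the hundreds of millions — and the associated graph will have a proportional number of edges. Everything else in the strategy is conceptually straightforward once the theory of Sections~\ref{sec:defs}--\ref{sec:algor} is in place; the challenge is engineering: building the cover efficiently (plausibly via adaptive subdivision restricted to boxes that intersect the forward-invariant enclosure of the attractor), rigorously enclosing $H_{a,b}(U)$ with controlled interval overestimation so that $\cR^+(\cF_0)$ does not blow up, simultaneously accommodating the parameter interval $P$ (which widens the enclosures and may force $\eta$ to be very small), and running the SCC and period algorithms on a graph at the limits of available memory. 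The linear-time complexity of the algorithms in Proposition~\ref{prop:alg} together with the bounded out-degree expected for a smooth map on a fine cover of similar-sized elements is what makes the whole enterprise feasible at this scale.
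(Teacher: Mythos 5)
Your proposal is correct and follows essentially the same strategy as the paper: reduce via Theorem~\ref{thm:allres} to verifying combinatorial mixing for a single representation $\cF_0$ with $\cR^+(\cF_0) \le 10^{-5}$, build that representation from an overlapping open box cover using interval arithmetic with interval-valued parameters (which automatically yields the open parameter set $P$), and check strong connectedness and aperiodicity of the associated graph via Algorithms~\ref{alg:conn} and~\ref{alg:period}. The only difference is in algorithmic detail: the paper grows $X$ and $\cU$ simultaneously by iterating a seed point close to the attractor until the cover is forward-closed, rather than fixing a trapping region $X$ first and then covering it, but this is an implementation choice, not a mathematical one.
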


We choose the H\'enon map for the so-called ``classical''
parameter values simply because it is a very well known
and well studied example for which nothing substantial is known.
We have purposefully formulated all our definitions
and results so far in full generality so that it
is straightforward how to apply them to many other examples.
What is left is to construct
a representation $\cF$ of the H\'enon map
such that $\cR^+ (\cF) \leq \varepsilon$
and then apply the algorithms described above to show that
$\cF$ is mixing.
We describe the construction of \( \cF \)
in detail so that minor modifications should allow
essentially the same construction to work in many other cases.

Notice that the result applies to an open set of parameter values.
This is an intrinsic feature of the method
deriving from the use of interval arithmetic.
Since both numbers $1.4$ and $0.3$ are not representable
in the binary floating-point arithmetic,
small intervals containing these numbers are taken
for the actual computations.
Therefore, the statement is automatically proved
for an open set of parameters containing these numbers.
This costs nothing from the computational point of view
and in fact corresponds to the natural fact
that numerical methods necessarily only yield results
which are stable under sufficiently small perturbations.
%

\subsection{General strategy}
\label{sec:general}

We describe in detail the strategy for a general continuous map
$f \colon \bR^n \to \bR^n$.

\subsubsection{Open interval arithmetic}
\label{sec:interval}

For the purpose of the numerical computations,
we use a slight modification of interval analysis
introduced in~\cite{Moo66}.

In the typical approach,
closed intervals are used instead of numbers,
and the result of each arithmetic operation on such intervals
is defined as the smallest possible interval
containing the results of the operation on any numbers
taken from each of the intervals,
e.g., $[a_1,a_2] - [b_1,b_2] = [a_1 - b_2, a_2 - b_1]$.
Since we are interested in doing these computations
using a fixed-size floating point representation of real numbers,
the set of actual numbers that can be represented is finite,
and thus we must round the endpoints of the resulting intervals
to the nearest representable number in the downwards direction
for the left endpoint, and in the upwards direction for the right one.
As a consequence, the result of calculations carried out
on intervals is an interval that contains every possible
exact result of those operations on the numbers
belonging to these intervals.

Since we work with open covers, in our case it is necessary
to work with open intervals, instead of closed ones.
This implies slight differences in the case of some
arithmetic operations on intervals.
Although addition, subtraction, multiplication and division
are the same, those operations in which closed intervals
would arise must be slightly changed, like rising to even powers
(e.g., computing $x^2$) or some trigonometric functions
(mainly $\sin$ and $\cos$).
Namely, the result of an operation on open intervals
with representable endpoints
is defined as the smallest \emph{open} interval
with representable endpoints which contains
the results of the operation on single elements
taken from these intervals,
e.g., $(-1,1)^2 = (-\varepsilon, 1)$,
where $-\varepsilon < 0$ is the largest representable
negative number (the set of representable numbers is finite,
so this number is well defined).

\subsubsection{Open cover parameters}
\label{sec:covparam}

We start by selecting a bounded rectangular area
$B := (a_1, a_1 + w_1) \times \cdots
\times (a_n, a_n + w_n) \subset \bR^n$
which is assumed to contain the dynamics of our interest.
We cover this area with a finite family of overlapping open boxes
($n$-dimensional hypercubes). We set some number $p_1 \in \bN$
of parts into which we intend to subdivide the first interval
$(a_1, a_1 + w_1)$ in the definition of $B$, and then we compute
the numbers $p_2, \ldots, p_n$
so that each $p_i$ is approximately proportional to $w_i$
for $i = 1, \ldots, n$.
Then we choose some small \( \kappa > 0 \)
to be the size of the overlapping margin. As a general rule
we can choose \( \kappa \) to be the smallest positive
representable number in the chosen computer precision
(recall that the set of representable numbers in \( (0,1) \)
is finite). We then consider the family
\[
\cB :\approx \big\{ \Pi_{i = 1}^{n}
\big(a_i + k_i w_i / p_i,
a_i + (k_i + 1) w_i / p_i + \kappa\big)
: k_i = 0, \ldots, p_i - 1 \big\},
\]
where the \( :\approx \) symbol is used to indicate the fact
that the actual endpoints of the intervals are computed
in the floating point arithmetic with rounding the result
of each operation to the nearest representable number
either in the downwards direction (for the left endpoints)
or in the upwards direction (for the right endpoints).

\subsubsection{Construction of $X$}
\label{sec:constrX}

The final cover \( \cU \), and thus the actual space $X$,
is chosen by an iterative procedure
from the elements of \( \cB \), so that
\( \cU \subset \cB \). In particular, this cover is
necessarily essential.

The iterative procedure is carried out as follows.
We start with a point $x_0 \in \bR^n$
as close to the attractor as possible.
This point may be obtained for example by some
non-rigorous numerical simulation of the dynamics,
but the construction is in general not particularly sensitive
to this choice.
We then define an initial approximation of the cover by letting
\[
\cU_0 := \{U \in \cB : x_0 \in U\}.
\]
For each \( U \in \cU_{0} \)
we compute an open set \( \widehat{f(U)} \)
as the image of \( U \) under \( f \)
using interval arithmetic. This is a rigorous
upper bound for \( f(U) \) in the sense that
\[
f(U) \subset \widehat{f(U)}.
\]
We then define the multivalued map
\(\cF_0 \colon \cU_{0} \multimap \cB \) by
\[
\cF_0 (U) := \{B \in \cB : B \cap \widehat{f (U)} \neq \emptyset\}.
\]
At this point we compare \( \cU_{0} \) with
\( \cF_0 (\cU_{0}) = \bigcup_{U\in\cU_{0}}\cF_0(U) \).
If \( \cF_0 (\cU_{0}) \subset \cU_{0} \) then we let
\( \cU := \cU_{0}\) and define
\( \cF := \cF_0|_{\cU} \colon \cU \multimap \cU \).
This is our combinatorial representation
of the map \( f \colon X \to X \) on the set \( X := |\cU| \).
If \( \cF_0(\cU_{0}) \not \subset \cU_{0} \) then we define
\[
\cU_{1} := \cU_{0}\cup \cF_0 (\cU_{0})
\]
and we compute the map $\cF_1 \colon \cU_1 \multimap \cB$
as an extension of $\cF_0$ using interval arithmetic.
Note that only the images of the sets in
$\cF_0 (\cU_0) \setminus \cU_0$ have to be computed.
Proceeding again in the same way, we repeat this procedure
and we obtain \(\cU_{0},\ldots, \cU_{k}\),
as well as $\cF_0, \ldots, \cF_k$, until we eventually get
\( \cF(\cU_{k}) \subset \cU_{k} \), at which point we define
\[
\cU := \cU_{k}
\quad \text{ and } \quad
\cF := \cF_k|_{\cU} \colon \cU \multimap \cU.
\]

Although this process is guaranteed to terminate,
because of the finiteness of~$\cB$, the obtained result
might be faulty if $\widehat{f (U)} \not \subset B$
for some $U \in \cU$,
as then $\cF$ is not a combinatorial representation of $f|_X$.
If this happens then we say that the construction fails,
and one has to choose different parameters for the open cover
(Section~\ref{sec:covparam}) and try again.
In particular, if the system has an attractor
with a sufficiently large basin of attraction
and strong enough convergence,
then the computations should result in a valid map $\cF$,
provided the set $B$ is taken large enough,
the number $p_1$ is high enough,
and the accuracy of computations is good enough.

Note that if this construction succeeds
then the open set $X := |\cU|$
is contained in the attraction basin of the attractor,
$f (X) \subset |\cF (X)| \subset X$,
and $\cF$ is a combinatorial representation
of $f|_X \colon X \to X$.

\subsubsection{Verification of the properties of $\cF$}
\label{sec:verif}

The outer resolution $\cR^+ (\cF)$ of the map~$\cF$
can be easily calculated using interval arithmetic
during the computation of $\cF_i$,
based on the diameter of a cover of $\widehat{f (U)}$
whenever $\cF_i (U)$ is constructed.

After the combinatorial map $\cF$ has been constructed,
one can verify whether it is transitive and mixing
by a straightforward application of Algorithms
\ref{alg:conn} and~\ref{alg:period}.

\subsection{Application to the H\'enon map}

We now give the specific data involved in the calculations
and discuss in more details the numerical
and computational issues involved.

\subsubsection{Constructing a combinatorial representation}

We closely follow the general strategy described
in Section~\ref{sec:general}.
We choose the rectangular region $B$ in such a way
that an approximation of the attractor
found in numerical simulations is contained in $B$
with some safety margin (about $0.1$ at each side).
More precisely, we set
\begin{eqnarray*}
a_1 :\approx -1.4, & \quad &
w_1 :\approx 2.8, \\
a_2 := -0.5, & \quad &
w_2 := 1.0,
\end{eqnarray*}
where the symbol ``$:\approx$'' indicates that we take
a representable number close to the given decimal number
(it is most likely the nearest representable number,
depending on a particular compiler, but it does not really matter).
Note that the numbers which are powers of $2$ are representable
and thus the actual values of $a_2$ and $w_2$
used in the calculations are exact.
We choose $\kappa$ to be the smallest
positive (normal) number representable
in the standard double precision floating point arithmetic,
which is approximately $2 \cdot 10^{-308}$.

The number $p_1$ is a parameter of the program that can be changed
at each run (it is set up from the command line
each time the program is launched),
and we figured out by trial and error
that for $p_1 := 2{,}132{,}419$
(for which the corresponding $p_2$ was taken as $761{,}578$)
the algorithm constructs a map $\cF$
with $\cR^+ (\cF) < 10^{-5}$.

As an initial point supposedly very close to the attractor
we take
\[
x_0 :\approx (0.61989426930989, 0.17586130934794).
\]
This point was found in numerical simulations
by iterating the origin a little over 100 million times.

The parameters of the H\'enon map for which we want to carry out
the computations, $a = 1.4$, $b = 0.3$, are not representable
in the binary floating point arithmetic.
Therefore, we take open intervals containing them,
computed in the program as the quotients $14/10$ and $3/10$,
respectively, with the left endpoints of the intervals rounded down
to the nearest representable numbers, and the right endpoints
rounded up.

For each $U \in \cB$, which is a product of two open intervals,
we calculate \( \widehat{f(U)} \)
by means of open interval arithmetic,
as described in Section~\ref{sec:interval},
using the formula for the H\'enon map
and the intervals containing the parameters $a$ and $b$,
as explained above.

Instead of constructing subsequent maps $\cF_i$,
the algorithm is set up in such a way
that it constructs a list $\cU$
containing the elements of the initial cover $\cU_0$
(which covers the initial point $x_0$),
and then for each element of this list
it computes $\cF (U)$ and immediately appends
the elements of $\cF (U)$ which are not yet in the list $\cU$
to the end of this list.
The algorithm terminates if the end of the list has been reached,
which means that all the elements of $\cF (U)$
for every $U$ in the list are already in the list.
The algorithm quits with a failure result
if \( \widehat{f(U)} \not \subset B \).
A sample result of the constructed set $\cU$
is illustrated in Figure~\ref{fig:hencover}.

\begin{figure}[htbp]
\includegraphics[height=7cm]{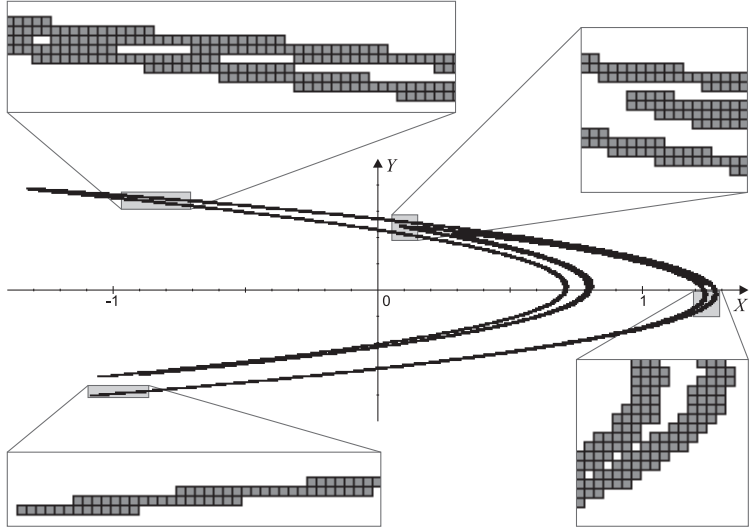}
\label{fig:hencover}
\caption{A sample low-resolution cover of the H\'enon attractor
with close-ups of selected details.}
\end{figure}

\subsubsection{Applying the graph algorithms}

The constructed cover $\cU$ and combinatorial map $\cF$
are represented in two data structures:
an array of pairs of open intervals
which represent the elements of $\cU$
(that are products of these pairs of intervals),
and a directed graph whose vertices are the indices
to this array.
After the map $\cF$ has been constructed,
the memory used for the array representing $\cU$ is released,
and the graph algorithms are run on the graph representation of $\cF$.

In the actual implementation of Algorithms \ref{alg:scc}
and~\ref{alg:period}, instead of using the recursive call
to the subroutines {\tt DFS-Visit} and {\tt tarjan},
respectively, we use a stack version of DFS because of the limitations
of some systems which allow for a limited recursion depth only.
In this version, instead of calling the subroutine,
the current parameters are put on the stack
and another round of the main loop is started with new values,
with a return from the recursive call corresponding to taking
the previously stored values of the variables from the stack.
This is a standard technique, and one can check the details
of its implementation in our software available at~\cite{WWW}.

\subsubsection{The cost of the computations}
\label{sec:cost}

As it should be expected,
the time and memory usage of the computations for the H\'enon map
heavily depend on the number $p_1$.
For small values of $p_1$, up to some $20{,}000$,
the time of computation on a computer
with a contemporary modern processor
(Intel\textregistered{} Xeon\textregistered{} 5030 2.66 GHz
was used in our computations)
should not exceed 10~seconds and use a negligible amount
of memory (up to about 25~MB).
The smallest number $p_1$ for which the computations were
successful was $p_1^{\min} = 446$
(with the corresponding $p_2^{\min} = 159$,
and the computed $\cR^{+} (\cF^{\min})$ was below $0.05$.
For smaller values of $p_1$ (and also for a few larger ones)
the algorithm fails because of encountering the situation
in which \( \widehat{f(U)} \not \subset B \)
for some $U$ in the cover being constructed.

For higher values of $p_1$,
we observed that if $p_1$ is increased $10$ times
then the outer resolution of the computed map $\cF$
decreases about $10$ times,
while the number of elements in the constructed cover $\cU$
grows about $20$ times,
and so do both the computation time and memory usage
(see the actual results available at~\cite{WWW}).
In particular, for the highest tested $p_1^{\max} := 2{,}132{,}419$
(with the corresponding $p_2^{\max} = 761{,}578$),
a cover $\cU^{\max}$ consisting of $161{,}448{,}094$ boxes
was constructed, and $\cR^{+} (\cF^{\max})$
was slightly below $10^{-5}$.
The computation time was about $1$ hour and $17$ minutes,
out of which the majority was used for the construction
of the cover $\cU$ and the graph associated to the map $\cF$,
while running the graph algorithms took less than $6$ minutes.

In these computations we reached the limits
of the computer equipment available to us
at the time of writing this paper,
but with the development of technology
one can speculate on the possibilities
of doing more extensive computations in the future.
Based on our rough estimates, obtaining the resolution
$10^{-6}$ would require some $26$~hours of processor time
and about $360$~GB of memory
(twice more than $20 \cdot 9 = 180$~GB because of the need
to switch from $32$-bit to $64$-bit integers
in the graph representation,
due to the high number of vertices and edges).
Getting down to $10^{-7}$
would take over $500$~hours ($3$~weeks)
and use some $7$~TB of memory.
Without switching to distributed computations
and probably also changing the approach
(e.g., re-computing the graph on-the-fly
instead of storing it in the memory),
these and higher resolutions seem to be still out of reach
for many years to come.
However, one might argue that such high precision
of the results is not necessary for real applications,
and $10^{-5}$ is more than one might require,
so investing in better results does not make sense at this point.

Finally, we would like to point out that the method we have developed
is dimension-independent, and so is the related software
available at~\cite{WWW}. However, the number of elements
in a typical cover of a complicated attractor should be expected
to grow considerably faster in higher dimensions
with the increase of the target resolution.
This might be a major limitation of the applicability of our method
for obtaining results at very fine resolutions
for more complicated systems.
In practice, however, even complicated dynamics
in high-dimensional spaces
often concentrates around low-dimensional attractors
(like the topological horseshoes),
in which case the resolutions interesting from the point of view
of applications might be still within the reach
of contemporary computers, and we sincerely hope to see
such applications in the future.


\section{Final remarks: Finite resolution
versus topological and measurable dynamics}

It is tempting to try to see finite resolution properties
as approximations of the ``real'', for example topological,
properties of the underlying map \( f\colon X \to X \).
In terms of the transitivity and mixing properties considered here,
this is partly justified in the sense that if \( f \)
is topologically transitive or mixing
then it is also combinatorially transitive or mixing, respectively,
at all resolutions (for completeness, we give a formal proof
in Appendix~\ref{app:mixing}).
However our combinatorial notions of transitivity and mixing
are not uniquely defined by this requirement, and to fully justify
the definitions and the idea that we are approximating
the topological properties we would need to show the converse result
that if \( f \) is combinatorially transitive or mixing
at all resolutions then it is topologically transitive or mixing.
It is possible to obtain such a double implication for certain kinds
of properties, see for instance the comprehensive discussion
on precisely this topic in \cite{Mro96}.
However, as mentioned in the introduction,
this can be achieved only for ``robust'' properties
which are persistent under small perturbations of the system.
It seems therefore that in general the finite resolution
dynamical properties of a system cannot be thought of
as an approximation of the topological properties,
at least not in a naive sense.

For a time we struggled with this limitation of the theory
and perceived it as a weakness. However, on further reflection
we realized that it was in fact arising from an incorrect
understanding of what this approach is actually about.
The mistake was to assume that the topological point of view
is in fact the ``real'' dynamics and thus the ultimate goal.
In fact, on a fundamental level, the real dynamics,
if we really want to have such a notion,
has to be just the dynamics of \( f \) thought of
as a function on the set \( X \).
Any more sophisticated description
necessarily relies on additional structure
and the description then necessarily has to be
in a form which is compatible with this structure.
There are at least two major frameworks or structures through which,
in many cases, one and the same dynamical system can be studied:
the \emph{topological} and the \emph{measurable},
which provide in some sense two alternative points of view
on the same dynamics. Each of these points of view
comes with its own definitions, notions,
and results which are intrinsically motivated
within that particular framework.
A good example, very relevant to the present paper,
is provided by the notions of topological mixing
and measure-theoretic mixing. Both of these notions
formalise some intuitive notion of ``mixing''
within the corresponding framework,
but there is no direct relationship or formal implication
between them in general (some systems may be topologically mixing
but not measure-theoretically mixing and vice versa).
Analogously, the notion of mixing which we give in the paper
is a combinatorial notion which is very natural
in the finite resolution setting.
The relationship of this notion with those of topological
and measure-theoretical mixing is interesting
and quite complex, and certainly deserving of further thought,
though beyond the scope of this paper.

In conclusion, we believe that \emph{the framework
of finite resolution dynamics provides one
of several possible structures through which to study the dynamics.
Accordingly, the definitions and dynamical features of interest
should be intrinsically motivated
within the finite resolution framework.}
Thus finite resolution dynamics
should be seen as an alternative structure,
alongside the topological and the measurable,
which can contribute to an effective study
of a dynamical system from a different point of view.


\appendix

\section{Essential covers}
\label{app:esscov}

\begin{proposition}
From any cover it is possible to create an essential one
(without increasing $\cR^+$,
but possibly decreasing the number of elements).
\end{proposition}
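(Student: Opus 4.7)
My plan is to produce the essential cover from $\cU$ by combining the classical shrinking lemma, which holds in any normal space and in particular in any metric space, with iterative removal of redundant elements. Removing redundant elements is the mechanism by which the number of elements may decrease, while shrinking produces a positive margin between the closures of the new cover elements and the complements of the originals, which will yield the uniform $\varepsilon$ required by the definition of essential cover.

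Concretely, given $\cU = \{U_1, \ldots, U_n\}$, I would first invoke the shrinking lemma to obtain an open cover $\{V_i\}_{i=1}^n$ of $X$ with $\overline{V_i} \subseteq U_i$ for every $i$. Since the diameter of a set equals the diameter of its closure in a metric space, $\diam V_i \leq \diam U_i$, and hence $\cR^+(\{V_i\}) \leq \cR^+(\cU)$. Then I would iteratively discard any $V_i$ such that $\{V_j\}_{j \neq i}$ still covers $X$; since the family is finite, the process terminates in finitely many steps and yields a sub-cover $\{V_j\}_{j \in J}$ in which no element is redundant, so that for every $j \in J$ there exists an exclusive point $x_j \in V_j \setminus \bigcup_{k \in J,\ k \neq j} V_k$. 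This is the step that may decrease the number of elements in the cover.

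Finally, I would verify that $\{V_j\}_{j \in J}$ is essential, that is, that each $V_j$ admits an exclusive ball of some uniform radius $\varepsilon > 0$. The main obstacle is precisely here, because non-redundancy by itself only supplies the exclusive point $x_j$, and such a point could a priori lie on the topological boundary $\partial V_k$ of a neighbor, so that arbitrarily small balls around it still meet $V_k$. To sidestep this I would apply the shrinking lemma a second time to $\{V_j\}_{j \in J}$ to obtain $\{W_j\}$ with $\overline{W_j} \subseteq V_j$, and exploit the strictly positive separation between $\overline{W_j}$ and $X \setminus V_j$, together with the finiteness of the cover, to locate for each $j$ a point in $V_j$ with a ball disjoint from every $\overline{V_k}$ with $k \neq j$. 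If any element becomes redundant after this second shrinking, it is removed as well and the argument is iterated; since each iteration either strictly decreases the (nonnegative, integer) number of elements or finishes the verification, the procedure terminates in finitely many steps, and the final inequality $\cR^+(\cU') \leq \cR^+(\cU)$ follows from the fact that shrinking only decreases diameters and removal does not change them.
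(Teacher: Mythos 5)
Your approach — the shrinking lemma plus iterative removal of redundant elements — is genuinely different from the paper's. The paper processes the elements one at a time: at each step it either deletes the current element (if it is covered by the others), or locates an interior ball and excises a slightly smaller closed ball from the \emph{remaining} elements, so that the exclusive ball condition is established inductively. You instead rely on the classical shrinking lemma and on non-redundancy. Both strategies are reasonable; the paper's avoids the shrinking lemma at the cost of a somewhat delicate bookkeeping step.

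There is, however, a real gap in the version you wrote. You aim to show that the family $\{V_j\}_{j \in J}$, obtained after the first shrinking and the removal of redundant elements, is itself essential, by locating, for each $j$, a point of $V_j$ with a ball disjoint from $\overline{V_k}$ for all $k \neq j$. Such a point need not exist. Take $X = [0,2]$ with the usual metric, and consider the minimal cover $V_1 = [0,1)$, $V_2 = (0.5,1.5)$, $V_3 = (1,2]$. No element can be dropped (removing $V_2$ leaves $1$ uncovered), yet $V_2 \setminus (\overline{V_1} \cup \overline{V_3}) = (0.5,1.5) \setminus ([0,1] \cup [1,2]) = \emptyset$: every point of $V_2$ lies in the closure of a neighbour, so $V_2$ admits no exclusive ball whatsoever. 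The second shrinking you invoke produces a \emph{new} cover $\{W_j\}$; it does not, and cannot, make $\{V_j\}$ essential. The ``iterate if something becomes redundant'' clause does not help here either, since the $\{V_j\}$ are already minimal, and the shrinking lemma preserves the covering property, so it introduces no new redundancy.

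The good news is that your plan is salvageable with a one-line change: the output cover should be $\{W_j\}$, not $\{V_j\}$, and the exclusive balls should be taken disjoint from the $\overline{W_k}$'s rather than from the $\overline{V_k}$'s. Concretely: minimality gives $x_j \in V_j \setminus \bigcup_{k \neq j} V_k$; since $\{W_j\}$ is a cover with $W_k \subset V_k$, the point $x_j$ must land in $W_j$ and in no other $W_k$; and since $\overline{W_k} \subset V_k$ while $x_j \notin V_k$, each $\overline{W_k}$ ($k \neq j$) is a closed set avoiding $x_j$, so a sufficiently small ball $B(x_j,\varepsilon_j) \subset W_j$ misses every $\overline{W_k}$. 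Taking $\varepsilon = \min_j \varepsilon_j > 0$ and noting $W_j \subset V_j \subset U_j$ gives an essential cover with $\cR^+$ not increased. (With this fix, the first application of the shrinking lemma is in fact superfluous: it is enough to pass to a minimal subcover of $\cU$ and shrink once.)
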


\begin{proof}
Let $\cU^0 = (U_1^0, \ldots, U_n^0)$ be an ordered cover
that is not necessarily essential. We shall construct a series
of gradually ``corrected'' ordered covers
$\cU^1, \ldots, \cU^n$ such that each of the covers
has exactly $n$ elements, $\cU^k = (\cU_1^k, \ldots, \cU_n^k)$,
and each of the first $k$ elements of $\cU^k$
either is empty, or satisfies the exclusive ball condition
that appears in Definition~\ref{def:esscov} of an essential cover.
Then $\cU^n$ will give rise to an essential cover
after having removed the empty sets.
We proceed by induction.
Note that $\cU^0$ satisfies the inductive assumption.
Suppose that $\cU^k$ has been constructed for some $k < n$.
Then each $U_i^k$ for $i \leq k$ is either empty
or satisfies the exclusive ball condition,
and then we set $U_i^{k + 1} := U_i^k$ for $i \leq k$.
Consider $U_{k + 1}^k$. If it is empty or satisfies
the exclusive ball condition then take $U_i^{k + 1} := U_i^k$
also for all $i = k + 1, \ldots, n$.
If $U_{k + 1}^k \subset \bigcup_{i \neq k + 1} U_i^k$
then set $U_{k + 1}^{k + 1} := \emptyset$
and $U_i^{k + 1} := U_i^k$ for all $i = k + 2, \ldots, n$.
Otherwise, take any $x \in U_{k + 1}^k$.
Since $U_{k + 1}^k$ is open, there exists $r > 0$
such that $B (x, r) \subset U_{k + 1}^k$.
Take $U_{k + 1}^{k + 1} := U_{k + 1}^k$
and $U_i^{k + 1} := U_i^k \setminus \cl B (x, r / 2)$
for $i = k + 2, \ldots, n$.
Note that all $U_i^{k + 1}$, $i = 1, \ldots, n$, are open sets
(although some might be empty),
and $\bigcup_{i = 1}^n U_i^k = \bigcup_{i = 1}^n U_i^{k + 1}$,
so $\cU^{k + 1}$ is an ordered cover of the same space.
Since $U_i^{k + 1} = U_i^k$ for $i \leq k$
and $U_i^{k + 1} \subset U_i^k$ for $i > k$,
the exclusive ball condition for each nonempty $U_i^{k + 1}$
with $i \leq k$ follows directly from the one
for the corresponding $U_i^k$.
Moreover, by the construction,
$U_{k + 1}^{k + 1}$ is either empty,
or satisfies the exclusive ball condition with $B (x, r / 2)$.
\end{proof}

\section{Topological mixing implies finite resolution mixing}
\label{app:mixing}

\begin{proposition}
Suppose \( X \) is a metric space and \( f\colon X \to X \)
a continuous map which is topologically transitive
or topologically mixing.
Then any combinatorial representation \( \mathcal F \) of \( f \)
is transitive or mixing, respectively.
\end{proposition}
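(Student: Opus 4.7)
The plan is to reduce both statements to a single elementary observation: because elements of an (essential) cover are nonempty open subsets of $X$, the hypothesis of topological transitivity or mixing applies directly to them, and the definition of a representation, together with the fact that $\cF^n$ represents $f^n$, immediately translates a set-theoretic overlap condition into a combinatorial inclusion condition.

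More concretely, I would first unpack the target. By the symmetry observation made just after Proposition~\ref{prop:tranmix}, the condition $V \in \cF^{-n}(U)$ is equivalent to $U \in \cF^n(V)$, so it suffices to prove: (i) for transitivity, for any $U, V \in \cU$ there exists $n > 0$ with $U \in \cF^n(V)$; (ii) for mixing, for any $U, V \in \cU$ there exists $N > 0$ such that $U \in \cF^n(V)$ for all $n > N$. I would then fix arbitrary $U, V \in \cU$ and note that, as $\cU$ is a cover of $X$ (and, by convention of the paper, essential), both $U$ and $V$ are \emph{nonempty} open subsets of $X$; hence they are legitimate test sets for the topological definitions.

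The core step then invokes the remark following Definition~\ref{def:finmap} on composition: since $\cF$ is a representation of $f$, the iterate $\cF^n$ is a representation of $f^n$, i.e. for every $n>0$ and every $V \in \cU$,
\[
\cF^n(V) \supseteq \{W \in \cU : W \cap f^n(V) \neq \emptyset\}.
\]
In the transitive case, topological transitivity supplies some $n > 0$ with $f^n(V) \cap U \neq \emptyset$, and the displayed inclusion forces $U \in \cF^n(V)$, establishing (i). In the mixing case, topological mixing supplies an $N > 0$ such that $f^n(V) \cap U \neq \emptyset$ for every $n > N$, and the same inclusion then gives $U \in \cF^n(V)$ for every $n > N$, establishing (ii).

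There is essentially no obstacle here: the only point requiring any care is to make sure the asymmetry between $\cF^n(V)$ and $\cF^{-n}(U)$ in the definitions is handled cleanly (which is why I invert it via the symmetry remark at the outset), and to verify that cover elements are genuinely nonempty so that the topological hypotheses can be applied. No continuity, no Lipschitz bounds, no resolution estimate, and no use of the outer or inner resolution enter — the statement is a purely formal consequence of the definitions, which is consistent with the remark in the final section that the implication from topological to combinatorial is the ``easy'' direction.
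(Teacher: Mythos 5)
Your proof is correct and follows essentially the same route as the paper: invoke the fact that $\cF^n$ represents $f^n$, observe that cover elements are nonempty open sets, and read off $U \in \cF^n(V)$ from the topological hypothesis. The only cosmetic difference is that the paper states topological transitivity/mixing via $f^{-n}(U)\cap V\neq\emptyset$ and explicitly converts this to $U\cap f^n(V)\neq\emptyset$, whereas you use the (equivalent) forward-image formulation directly.
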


\begin{proof}
Recall that \( f \) $f$ is \emph{transitive} if for every two
open sets $U, V \subset X$ there exists $n > 0$
such that $f^{-n} (U) \cap V \neq \emptyset$.
Let $\cF \colon \cU \multimap \cU$
be a combinatorial representation
of a transitive map $f \colon X \to X$.
We shall show that $\cF$ is transitive.
Take any $U, V \in \cU$.
Since $f$ is transitive, there exists $n > 0$
such that $f^{-n} (U) \cap V \neq \emptyset$.
Therefore, $f^n (f^{-n} (U)) \cap f^n (V) \neq \emptyset$.
Since $f^n (f^{-n} (U)) \subset U$,
we have $U \cap f^n (V) \neq \emptyset$.
Note that $\cF^n$ is a representation of $f^n$,
and thus $U \in \cF^n (V)$.
Since the choice of $U$ and $V$ was arbitrary,
this implies that $\cF$ is transitive.

Similarly, $f$ is \emph{mixing} if for every two
open sets $U, V \subset X$ there exists $N > 0$
such that $f^{-n} (U) \cap V \neq \emptyset$
for all $n > N$.
Let $\cF \colon \cU \multimap \cU$ be a combinatorial
representation of a mixing map $f \colon X \to X$.
We shall show that $\cF$ is mixing.
Let $U, V \in \cU$.
Since $f$ is mixing, there exists $N > 0$
such that $f^{-n} (U) \cap V \neq \emptyset$
for all $n > N$. Fix this $n$.
Note that $f^n (f^{-n} (U)) \cap f^n (V) \neq \emptyset$.
Since $f^n (f^{-n} (U)) \subset U$,
we have $U \cap f^n (V) \neq \emptyset$.
Recall that $\cF^n$ is a representation of $f^n$,
and thus $U \in \cF^n (V)$.
Since the choice of $U$ and $V$ was arbitrary,
this proves that $\cF$ is mixing.
\end{proof}



\begin{thebibliography}{10}

\bibitem{Ara07}
{\sc Z. Arai},
\newblock On hyperbolic plateaus of the H{\'e}non map,
\newblock {\em Experiment. Math.}, 16 (2007), 181--188.

\bibitem{AraKalKokMisOkaPil09}
{\sc Z. Arai, W. Kalies, H. Kokubu, K. Mischaikow,
 H. Oka and P. Pilarczyk},
\newblock A database schema for the analysis of global dynamics
 of multiparameter systems,
\newblock {\em SIAM J. Appl. Dyn. Syst.} 8 (2009), 757--789.

\bibitem{AraMis06}
{\sc Z. Arai and K. Mischaikow},
\newblock Rigorous computations of homoclinic tangencies.
\newblock {\em SIAM Journal on Applied Dynamical Systems},
 5 (2006) 280--292.

\bibitem{ArbMat04}
{\sc A. Arbieto and C. Matheus},
\newblock Decidability of Chaos for some families of dynamical systems,
\newblock {\em Found. Comput. Math. } (2004), 269--275.

\bibitem{BenCar91}
{\sc M. Benedicks and L. Carleson},
\newblock The dynamics of the {H}\'enon map,
\newblock {\em Ann. of Math. (2)} 133 (1991), 73--169.

\bibitem{BenYou93}
{\sc M. Benedicks and L.S. Young},
\newblock Sinai-Bowen-Ruelle measures for certain H{\'e}non maps,
\newblock {\em Invent. Math.}, 112 (1993), 541--576.

\bibitem{Bow70}
{\sc R. Bowen},
\newblock Markov partitions for {A}xiom {${\rm A}$} diffeomorphisms,
\newblock {\em Amer. J. Math.} 92 (1970), 725--747.

\bibitem{Bow75}
{\sc R. Bowen},
\newblock {\em Equilibrium states and the ergodic theory
 of {A}nosov diffeomorphisms},
\newblock Lecture Notes in Mathematics,
 Vol. 470. Springer-Verlag, Berlin, 1975.

\bibitem{BowRue75}
{\sc R. Bowen and D. Ruelle},
\newblock The ergodic theory of {A}xiom {A} flows,
\newblock {\em Invent. Math.}, 29 (1975), 181--202.

\bibitem{Col98}
{\sc E. Colli},
\newblock Infinitely many coexisting strange attractors,
\newblock {\em Ann. Inst. H. Poincar{\'e} Anal. Non Lin{\'e}aire},
 15(5) (1998), 539--579.

\bibitem{CorLeiRivSte01}
{\sc T.~H. Cormen, C.~E. Leiserson, R.~L. Rivest and C. Stein},
\newblock {\em Introduction to algorithms},
\newblock MIT Press, Cambridge, MA, second edition, 2001.

\bibitem{EckKocWit84}
{\sc J.-P. Eckmann, H. Koch and P. Wittwer},
\newblock A computer-assisted proof of universality
 for area-preserving maps,
\newblock {\em Mem. Amer. Math. Soc.}, 47(289):vi+122, 1984.

\bibitem{Gal02}
{\sc Z. Galias},
\newblock Rigorous investigation of the Ikeda map by means of interval
 arithmetic,
\newblock {\em Nonlinearity}, 15 (2002), 1759--1779.

\bibitem{GonTurShi03}
{\sc S.~V. Gonchenko, D.~V. Turaev and L.~P. Shilprimenikov},
\newblock On the dynamic properties of diffeomorphisms with homoclinic
 tangencies,
\newblock {\em Sovrem. Mat. Prilozh.} 7 (2003), 91--117.

\bibitem{Hen76}
{\sc M. H{\'e}non},
\newblock A two-dimensional mapping with a strange attractor,
\newblock {\em Comm. Math. Phys.}, 50 (1976), 69--77.

\bibitem{Hru06}
{\sc S. Lynch Hruska},
\newblock A numerical method for constructing the hyperbolic structure of
 complex H{\'e}non mappings.
\newblock {\em Found. Comput. Math.}, 6 (2006) 427--455.

\bibitem{JabKul03}
{\sc D. Jab\l{}o\'nski and M. Kulczycki},
\newblock Topological transitivity, mixing and nonwandering set
 of subshifts of finite type -- a numerical approach,
\newblock {\em Intern. J. Computer Math.} 80 (2003), 671--677.

\bibitem{Jak81}
{\sc M.~V. Jakobson},
\newblock Absolutely continuous invariant measures
 for one-parameter families of one-dimensional maps,
\newblock {\em Comm. Math. Phys.}, 81(1) (1981), 39--88.

\bibitem{KalMisVan05}
{\sc W.~D. Kalies, K. Mischaikow and R.~C.~A.~M. VanderVorst},
\newblock An algorithmic approach to chain recurrence,
\newblock {\em Found. Comput. Math.} 5 (2005), 409--449.

\bibitem{KapSim07}
{\sc T. Kapela and C. Sim{\'o}},
\newblock Computer assisted proofs for nonsymmetric
 planar choreographies and for stability of the eight,
\newblock {\em Nonlinearity}, 20 (2007), 1241--1255.

\bibitem{KapZgl03}
{\sc T. Kapela and P. Zgliczy{\'n}ski},
\newblock The existence of simple choreographies for the N-body
 problem--a computer-assisted proof,
\newblock {\em Nonlinearity}, 16 (2003), 1899--1918.

\bibitem{Lan82}
{\sc O. Lanford},
\newblock A computer-assisted proof of the Feigenbaum conjectures,
\newblock {\em Bull. Amer. Math. Soc. (N.S.)} 6 (1982), 427--434.

\bibitem{Lor63}
{\sc E.~N. Lorenz},
\newblock Deterministic non-periodic flows,
\newblock {\em J. Atmos. Sci.} 20 (1963), 130--141.

\bibitem{LuzMelPac05}
{\sc S. Luzzatto, I. Melbourne and F. Paccaut},
\newblock The {L}orenz attractor is mixing,
\newblock {\em Comm. Math. Phys.} 260 (2005), 393--401.

\bibitem{LuzTak06}
{\sc S. Luzzatto and H. Takahasi},
\newblock Computable conditions for the occurrence
 of non-uniform hyperbolicity in families of one-dimensional maps,
\newblock {\em Nonlinearity} 19 (2006), 1657--1695.

\bibitem{LuzTuc99}
{\sc S. Luzzatto and W. Tucker},
\newblock Non-uniformly expanding dynamics
 in maps with singularities and criticalities,
\newblock {\em Inst. Hautes Etudes Sci. Publ. Math.} 89 (1999), 179--226.

\bibitem{LuzVia00}
{\sc S. Luzzatto and M. Viana},
\newblock Positive Lyapunov exponents for Lorenz-like families with
 criticalities,
\newblock {\em Ast{\'e}risque}, (261):xiii, 201--237, 2000.

\bibitem{MisMro95}
{\sc K. Mischaikow and M. Mrozek},
\newblock Chaos in Lorenz equations: A computer assisted proof,
\newblock {\em Bull. Amer. Math. Soc. (N.S.)} 33 (1995), 66--72.

\bibitem{Moo66}
{\sc R.~E. Moore}, {\em Interval analysis}, Prentice-Hall, Inc.,
 Englewood Cliffs, N.J., 1966.

\bibitem{Mro96}
{\sc M. Mrozek},
\newblock Topological invariants, multivalued maps
 and computer assisted proofs in dynamics,
\newblock {\em Computers Math. Applic} 32 No. 4 (1996), 83-104.

\bibitem{Mro99}
{\sc M. Mrozek},
\newblock An algorithm approach to the Conley index theory,
\newblock {\em J. Dynam. Differential Equations} 11 (1999), 711--734.

\bibitem{MroPil02}
{\sc M. Mrozek and P. Pilarczyk},
\newblock The Conley index and rigorous numerics
 for attracting periodic orbits,
\newblock {\em Proceedings of the Conference
on Variational and Topological Methods in the Study
of Nonlinear Phenomena (Pisa, 2000)}, 65--74,
Progr. Nonlinear Differential Equations Appl., 49,
Birkh\"auser Boston, Boston, MA, 2002.

\bibitem{MorVia93}
{\sc L. Mora and M. Viana},
\newblock Abundance of strange attractors,
\newblock {\em Acta Math.} 171 (1993), 1--71.

\bibitem{New74}
{\sc S.E. Newhouse},
\newblock Diffeomorphisms with infinitely many sinks,
\newblock {\em Topology}, 13 (1974), 9--18.

\bibitem{PacRovVia98}
{\sc M.J. Pacifico, A. Rovella and M. Viana},
\newblock Infinite-modal maps with global chaotic behavior,
\newblock {\em Ann. of Math. (2)}, 148 (1998), 441--484.

\bibitem{PalTak93}
{\sc J. Palis and F.Takens},
\newblock Hyperbolicity and sensitive chaotic dynamics at homoclinic
 bifurcations, Cambridge University Press, Cambridge, 1993.

\bibitem{Pil99}
{\sc P. Pilarczyk},
\newblock Computer assisted method for proving existence
 of periodic orbits,
\newblock {\em TMNA} 13 (1999), 365--377.

\bibitem{WWW}
{\sc P. Pilarczyk},
\newblock Finite resolution dynamics. Software and examples,
\newblock {\tt http://www.pawelpilarczyk.com/finresdyn/}.

\bibitem{Pil03}
{\sc P. Pilarczyk},
\newblock Topological-numerical approach to the existence
 of periodic trajectories in ODEs,
\newblock {\em Discrete and Continuous Dynamical Systems 2003},
 A Supplement Volume: Dynamical Systems and Differential Equations,
 701--708.

\bibitem{PilSto08}
{\sc P. Pilarczyk and K. Stolot},
\newblock Excision-preserving cubical approach to the algorithmic
 computation of the discrete Conley index,
\newblock {\em Topology and its Applications} 155 (2008), 1149--1162.

\bibitem{Sin68}
{\sc J.~G. Sina{\u\i}},
\newblock Markov partitions and {U}-diffeomorphisms,
\newblock {\em Funkcional. Anal. i Prilo\v{z}en} 2 (1968), 64--89.

\bibitem{Szy97}
{\sc A. Szymczak},
\newblock A combinatorial procedure for finding
 isolating neighbourhoods and index pairs,
\newblock {\em Proc. Royal Soc. Edinburgh Sect. A} 127 (1997), 1075--1088.

\bibitem{Tar72}
{\sc R. Tarjan},
\newblock Depth-first search and linear graph algorithms,
\newblock {\em SIAM J. Comput.} 1 (1972), 146--160.

\bibitem{Thu99}
{\sc H. Thunberg},
\newblock Positive exponent in families with flat critical point,
\newblock {\em Ergodic Theory Dynam. Systems}, 19 (1999) 767--807.

\bibitem{Tsu93}
{\sc M. Tsujii},
\newblock Positive Lyapunov exponents in families
 of one-dimensional dynamical systems,
\newblock {\em Invent. Math.}, 111 91993) 113--137.

\bibitem{Tuc99}
{\sc W. Tucker},
\newblock The Lorenz attractor exists,
\newblock {\em C.R. Acad. Sci. Paris}, S\'erie~I
 328 (1999), 1197--1202.

\bibitem{WanYou01}
{\sc Q. Wang and L.-S. Young},
\newblock Strange attractors with one direction of instability,
\newblock {\em Comm. Math. Phys.} 218 (2001), 1--97.

\bibitem{AperWiki}
{\sc Wikipedia contributors},
\newblock Aperiodic graph,
\newblock {\em Wikipedia{,} The Free Encyclopedia} (2008)
{\tt http://en.wikipedia.org/w/index.php?title=%
 Aperiodic\_graph\&oldid=224030707}.

\bibitem{TarWiki}
{\sc Wikipedia contributors},
\newblock Tarjan's strongly connected components algorithm,
\newblock {\em Wikipedia{,} The Free Encyclopedia} (2009)
{\tt http://en.wikipedia.org/w/index.php?
title=Tarjan\%27s\_strongly\_connected\_components\_%
algorithm\&oldid=295022950}.

\bibitem{Zgl04}
{\sc P. Zgliczy{\'n}ski},
\newblock Rigorous numerics for dissipative
 partial differential equations.
 II. Periodic orbit for the Kuramoto-Sivashinsky
 PDE--a computer-assisted proof,
\newblock {\em Found. Comput. Math.}, 4 (2004), 157--185.

\end{thebibliography}

\end{document}